\date{\today}
\theoremstyle{plain}% default
\newtheorem{thm}{Theorem}[section]
\newtheorem{lem}[thm]{Lemma}
\newtheorem{prop}[thm]{Proposition}
\newtheorem{cor}[thm]{Corollary}
\theoremstyle{definition}
\newtheorem{de}[thm]{Definition}
\newtheorem{rem}[thm]{Remark}
\newtheorem{rems}[thm]{Remarks}
\newtheorem{ex}[thm]{Example}
\newtheorem{algo}[thm]{Algorithm}
\newcommand{\ts}[1]{\normalfont{\textsf{#1}}}
\newcommand{\K}{\ts k}
\renewcommand{\a}{\alpha}
\renewcommand{\b}{\beta}
\renewcommand{\c}{\gamma}
\newcommand{\e}{\varepsilon}
\newcommand{\s}{\sigma}
\newcommand{\G}{\Gamma}
\newcommand{\mor}[3]{$#1\colon #2 \to #3$}
\newcommand{\rad}[1]{\ts{rad} #1}
\renewcommand{\dim}[1]{\ts{dim}_{\ts k}\  #1 }
\newcommand{\Hom}[3]{\ts{Hom}_{#1}(#2,\,#3)}
\newcommand{\Ext}[4]{\ts{Ext}_{#1}^{#2}(#3,\,#4)}
\newcommand{\End}[2]{\ts{End}_{#1}(#2)}
\newcommand{\HH}[2]{\ts{HH}^{#1}(#2)}
 \title[Derived class of $m$-cluster tilted algebras of type $\mathbb{A}$]{Hochschild cohomology and the derived class of $m$-cluster tilted algebras of type $\mathbb{A}$}
\author[J.C. Bustamante]{Juan Carlos Bustamante}
\address{D\'epartement de Math\'ematiques, Universit\'e
  de Sherbrooke, Sherbrooke, Qu\'ebec, Canada J1K~2R1}
\email{juan.carlos.bustamante@usherbrooke.ca}
\author[V. Gubitosi]{Viviana Gubitosi}
\address{D\'epartement de Math\'ematiques, Universit\'e
  de Sherbrooke, Sherbrooke, Qu\'ebec, Canada J1K~2R1}
\email{viviana.gubitosi@usherbrooke.ca}
\keywords{$m$-cluster tilted algebras; Hochschild cohomology; gentle algebras}
\begin{document}
\maketitle
\section*{Introduction} Cluster categories were introduced in \cite{BMRRT06} in order to model the combinatorics of the cluster algebras of Fomin and Zelevinski \cite{FZ02}, using  tilting theory of hereditary finite dimensional algebras over algebraically closed fields. The clusters correspond to the tilting objects in the cluster category.

Roughly speaking, given an hereditary finite dimensional algebra $H$ as above, the cluster category $\mathcal{C}_H$ is obtained from the derived category $\mathcal{D}^b(H)$ by identifying the shift functor $[1]$ with the Auslander - Reiten translation $\tau$. By a result of Keller \cite{K05}, the cluster category is triangulated, and the same holds for  the category obtained from $\mathcal{D}^b(H)$ by identifying the composition $[m]:=[1]^m$ with $\tau$. The latter is called an $m$-cluster category, in which $m$-cluster tilting objects have been defined by Thomas, in \cite{T07}, who in addition showed that they are in bijective correspondence with the $m$-clusters of Fomin and Reading \cite{FR05}. The endomorphisms algebras of the $m$-cluster tilting objects are called $m$-cluster tilted algebras or, in case $m=1$, cluster tilted algebras.

Caldero, Chapoton and Schiffler gave in \cite{CSS06} an interpretation of the cluster categories $\mathcal{C}_H$ in case $H$ is hereditary of Dynkin type $\mathbb{A}$ in terms of triangulations of the disc with marked points on its boundary, using an approach also present in \cite{FST08} in a much more general setting. This has been generalized by several authors. For instance, Baur and Marsh in \cite{BM08}, considered $m$-angulations of the disc, modelling the $m$-cluster categories. In \cite{ABCP09}, Assem \emph{et al.} showed that cluster tilted algebras coming from triangulations of the disc or the annulus with marked points on their boundaries are gentle, and, in fact, that these are the only gentle cluster tilted algebras. The class of gentle algebras has been extensively studied in \cite{AH81, AG08, BB10, Buan-Vatne08, M10, SZ03}, for instance, and is particularly well understood, at least from the representation theoretic point of view. This class includes, for instance, iterated tilted, and cluster tilted algebras of types $\mathbb{A}$ and $\tilde{\mathbb{A}}$, and, as shown in \cite{SZ03}, is closed under derived equivalence. 

When studying module categories, one is often interested in them up to derived equivalence, or tilting-cotilting equivalence. In \cite{Buan-Vatne08}, Buan and Vatne gave a criterion to decide whether two cluster tilted algebras of type $\mathbb{A}$ are themselves derived equivalent or not. This has been done using the Cartan matrix as derived invariant, as well as mutations of quivers. Later, Bastian, in \cite{Bastian09}, gave an analogous classification for the $\tilde{\mathbb{A}}$ case. She used another thinner derived invariant, the function $\phi$ introduced by Avella-Alaminos and Geiss in \cite{AG08}. In \cite{BB10} a more general question has been considered, namely the characterization of the algebras that are derived equivalent to cluster tilted algebras of type $\mathbb{A}$ or $\tilde{\mathbb{A}}$. Again, in this paper the map $\phi$ is of central importance, and the characterizations therein are given in terms of the form of this map. In  another direction, results analogous to those of \cite{Buan-Vatne08} have been established for $m$-cluster tilted algebras of type $\mathbb{A}$ by Murphy in \cite{M10}: he described these algebras by quivers and relations, and gave a criterion permitting to decide whether two $m$-cluster tilted algebras of type $\mathbb{A}$ are derived equivalent or not. Again, he used the Cartan matrix as in \cite{Buan-Vatne08}, but ``elementary polygonal moves'' instead of  -- but equivalently to -- mutations.

The aim of this paper is to classify the algebras that are derived equivalent to $m$-cluster tilted algebras of type $\mathbb{A}$. Our approach differs from those of \cite{Buan-Vatne08, BB10, Bastian09, M10} in the fact that we use the Hochschild cohomology ring as derived invariant. Since the algebras we are interested in are gentle, the required computations can be done efficiently. It is a thinner invariant than the Cartan matrix, as it detects not only the number of cycles with full relations, but also their length, and it distinguishes the case where the characteristic of the field is $2$. In our case,  the structure of $\HH{\ast}{A}$ is much easier to compute than the map $\phi_A$. 

Recall that given a connected quiver $Q$, its \emph{Euler characteristi}c, $\chi(Q)=|Q_1|-|Q_0|+1$, is the rank of its first homology group, $Q$ being viewed as a graph.

The main result of this paper can be stated as follows:

\subsection*{Theorem A}\textit{ A connected algebra $A=\K Q/I$ is derived equivalent to a connected component of an $m$-cluster tilted algebra of type $\mathbb{A}$ if and only if  $(Q,I)$ is a gentle bound quiver having $\chi(Q)$ oriented cycles of length $m+2$, each of which has full relations. }

\medskip

In particular, specializing to the case $m=1$, recover known results of \cite{Buan-Vatne08, BB10}, and we obtain a criterion to decide whether or not an algebra is derived equivalent to a cluster tilted algebra of type $\mathbb{A}$. The latter is very easy to use, as it does not require any computation, in contrast with the known result of \cite{BB10}. Although it is not known if the function $\phi$ is a complete invariant in general, in \cite{BB10} it is shown that this is indeed the case for algebras derived equivalent to cluster tilted algebras of type $\mathbb{A}$. We generalize this result to the $m$-cluster tilted context.  Moreover, from our characterizations, we deduce the \emph{invariant pair} $(r,s)$ of an $m$-branched algebra $A=\K Q / I$: The integer $s$ is the rank of the Grothendieck group $K_0(A)$, while $r$ is the Euler characteristic of $Q$, which in addition parametrizes the structure of the  Hochschild cohomology ring  $\HH{\ast}{A}$. In case $A$ is connected, it is also the rank of the fundamental group $\pi_1(Q,I)$ of the bound quiver $(Q,I)$, which is free in this context.  Summarizing, we have:

\subsection*{Theorem B}\textit{ Let $A=\K Q / I$ and $A'=\K Q'/I'$ be connected $m$-branched algebras. Then the following conditions are equivalent.
\begin{enumerate}
 \item[$a)$] $A$ and $A'$ are derived equivalent,
\item[$b)$] $A$ and $A'$ are tilting-cotilting equivalent,
\item[$c)$] $A$ and $A'$ have the same invariant pair,
\item[$d)$] $\HH{\ast}{A} \simeq \HH{\ast}{A'}$ and $K_0(A) \simeq K_0(A')$,
\item[$e)$] $\phi_A = \phi_{A'}$.
\item[$f)$] $\pi_1(Q,I) \simeq \pi_1(Q',I')$ and $|Q_0|  = |Q'_0|$.
\end{enumerate}}

\medskip
The paper is organized as follows: In section 1 we recall facts about gentle algebras, derived and tilting-cotiling equivalences, and $m$-cluster tilted algebras. In section 2 we establish the facts about Hochschild cohomology that will be used in the sequel. In section 3 we introduce what we call $m$-branched algebras, precisely those described in the theorem above, and in section 4 we recall what Brenner-Butler tilting modules are. They are used in section 5 to remove the relations in the quiver of a branched algebra that do not lie in a cycle. Section 6 is devoted to the proof of the main theorem and some consequences, among which, in section 7, we recover the known results mentioned above.

\section{Preliminaries}
\subsection{Gentle algebras}
While we briefly recall some  concepts concerning bound quivers and algebras, we refer the reader to \cite{ASS06} or \cite{ARS95}, for instance, for unexplained notions.

Let \K \  be a commutative field. A quiver $Q$ is the data of two sets, $Q_0$ (the \textit{vertices}) and $Q_1$ (the \textit{arrows}) and two maps \mor{s,t}{Q_1}{Q_0} that assign to each arrow $\a$ its \textit{source} $s(\a)$ and its \textit{target} $t(\a)$. We write \mor{\a}{s(\a)}{t(\a)}. If $\b\in Q_1$ is such that $t(\a)=s(\b)$ then the composition of $\a$ and $\b$ is the path $\a\b$. This extends naturally to paths of arbitrary positive length. The \emph{path algebra} $\K Q$ is the $\K$-algebra whose basis is the set of all paths in $Q$, including one stationary path $e_x$ at each vertex $x\in Q_0$, endowed with the  multiplication induced from the composition of paths. In case $|Q_0|$ is finite, the sum of the stationary paths  - one for each vertex - is the identity.
 
If the quiver $Q$ has no oriented cycles, it is called \emph{acyclic}. A \emph{relation} in $Q$ is a $\K$-linear combination of paths of length at least $2$ sharing source and target.  A relation which is a path is called \emph{monomial}, and the relation is \emph{quadratic} if the paths appearing in it have all length $2$. Let $\mathcal{R}$ be a set of relations. Given an integer $n$, an \emph{$n$-cycle} is an oriented cycle consisting of $n$ arrows, and it is called \emph{an $n$-cycle with full relations} if the composition of any two consecutive arrows on this cycle belongs to $\mathcal{R}$. In the sequel, by cycle we mean oriented cycle. Given $\mathcal{R}$ one can consider the two-sided ideal of $\K Q$ it generates $I=\langle \mathcal{R}\rangle \subseteq  \langle Q_1 \rangle^2$. It is called \emph{admissible} if there exists a natural number $r\geqslant 2$ such that $\left\langle Q_1 \right\rangle^r \subseteq I$. The pair $(Q,I)$ is a \emph{bound quiver}, and associated to it is the algebra $A=\K Q/I$. It is known that any finite dimensional basic algebra over an algebraically closed field is obtained in this way, see \cite{ASS06}, for instance. 

Recall from \cite{AH81} that an algebra  $A= \K Q/I$ is said to be \emph{gentle} if  $I=\langle \mathcal{R} \rangle $, with $\mathcal{R}$ a set of monomial quadratic relations such that :
\begin{enumerate}
 \item[G1.] For every vertex $x\in Q_0$ the sets $s^{-1}(x)$ and $t^{-1}(x)$ have cardinality at most two;
 \item[G2.] For every arrow $\a\in Q_1$ there exists at most one arrow $\b$ and one arrow $\c$ in $ Q_1$ such that $\a\b\not\in I$, $\c\a\not\in I $;
 \item[G3.] For every arrow $\a\in Q_1$ there exists at most one arrow $\b$ and one arrow $\c$ in $Q_1$ such that $\a\b\in I$, $\c\a\in I$.
\end{enumerate}

Gentle algebras are special biserial (see \cite{WW85}), and have extensively been studied in several contexts, see for instance \cite{AG08, BB10, Buan-Vatne08, Bus06-cohomo, M10, SZ03}.

\subsection{Tilting-cotilting, and derived equivalences}
Given a finite dimensional algebra $A=\K Q/I$ a \emph{tilting} module is a finitely generated right $A$-module of projective dimension less than or equal to 1, having no self extensions and exactly $|Q_0|$ indecomposable non isomorphic direct summands, \cite{ASS06}. The notion of cotilting module is defined dually. Given a tilting $A$-module $T$, with $A$ hereditary, the algebra $\End{A}{T}$ is said to be \emph{tilted}. Two algebras $A$ and $B$ are said to be \emph{tilting-cotilting} equivalent it there exists a finite sequence of algebras $A=A_0, A_1,\ldots, A_r=B$ and $A_i$-tilting (or cotilting) modules $T_i$ such that $A_{i+1}=\End{A_{i}}{T_i}$ for $i\in\{0,1,\ldots, r-1\}$.

Denote by $\mathcal{D}^b(A)$ the bounded derived category of finite dimensional right $A$-modules. Its objects are bounded complexes of finite dimensional right $A$-modules, and morphisms are obtained from morphisms of complexes by localizing with respect to quasi-isomorphisms (see \cite{H88}). The category $\mathcal{D}^b(A)$ is triangulated, with translation functor induced by the shift of complexes. Two algebras $A$ and $B$ are \emph{derived equivalent} if the categories $\mathcal{D}^b(A)$ and $\mathcal{D}^b(B)$ are equivalent as triangulated categories. It has been shown by Happel \cite{H88} that if two algebras are tilting-cotilting equivalent, then they are derived equivalent. Moreover, Schr\"oer and Zimmermann showed in \cite{SZ03} that the class of gentle algebras is stable under derived equivalence.

\subsection{$m$-cluster tilted algebras} Let $H\simeq \K Q$ be an hereditary algebra such that $Q$ is acyclic. The derived category $\mathcal{D}^b(H)$ is triangulated, the translation functor, denoted by $[1]$, being induced from the shift of complexes. For an integer $n$, we denote by $[n]$ the composition of $[1]$ with itself $n$ times, thus $[1]^n = [n]$. In addition, $\mathcal{D}^b(A)$ has Auslander-Reiten triangles, and, as usual, the Auslander-Reiten translation is denoted by $\tau$. 

Let $m$ be a natural number. The $m$-cluster category of $H$ is the quotient category $\mathcal{C}_m(H):=\mathcal{D}^b(H)/ \tau^{-1} [m]$ which carries a natural  triangulated structure, see \cite{K05}. Following \cite{T07} we consider \textit{$m$-cluster tilting objects}  in $\mathcal{C}_m(H)$ defined as objects satisfying the following conditions:
\begin{enumerate}
 \item $\Hom{\mathcal{C}_m(H)}{T}{X[i]} = 0$ for all $i\in\{1,2,\ldots,m\}$ if and only if $X\in \ts{add}\ T$,
 \item $\Hom{\mathcal{C}_m(H)}{X}{T[i]} = 0$ for all $i\in\{1,2,\ldots,m\}$ if and only if $X\in \ts{add}\ T$.
\end{enumerate}
The endomorphism algebras of such objects are called \emph{$m$-cluster tilted algebras of type $Q$}. In case $m=1$, this definition specializes to that of a cluster tilted algebra, a class intensively studied since its definition in \cite{BMR06}.  

In \cite{ABCP09} it has been shown that cluster tilted algebras are gentle if and only if they are of type  $\mathbb{A}$ or $\tilde{\mathbb{A}}$. In \cite{Buan-Vatne08} Buan and Vatne gave the derived equivalence classification of cluster tilted algebras of type $\mathbb{A}$. They showed that two cluster tilted algebras of type $\mathbb{A}$ are derived equivalent if and only if their quivers have the same  number of 3-cycles with full relations.  Later, in \cite{Bastian09} the same work has been done for cluster tilted algebras of type $\tilde{\mathbb{A}}$. Moreover, in \cite{BB10}, the algebras that are derived equivalent to cluster tilted algebras of types $\mathbb{A}$ or $\tilde{\mathbb{A}}$ have been classified. In this classification, again, combinatorial data of the involved bound quiver is of central importance.

On the other hand, using arguments similar to those of \cite{ABCP09}, Murphy showed in \cite{M10} that $m$-cluster tilted algebras are gentle and  that the only cycles that may exist in their quivers are $(m+2)$-cycles, each of which has full relations. Perhaps the most noticeable differences between cluster tilted and $m$-cluster tilted algebras is that the latter need not to be connected, and that the quiver does not determine the algebras. Indeed, in case $m>1$, there can be relations outside the cycles, but there are at most $m-1$ consecutive such relations.  Also, he described the connected components of $m$-cluster tilted algebras up to derived equivalence, a result analogous to that of \cite{Buan-Vatne08}, namely: two connected components of $m$-cluster tilted algebras of type $\mathbb{A}$ are derived equivalent if and only if their quivers have the same number of $(m+2)$-cycles (theorem 1.2 in \cite{M10}). Moreover, he showed that any connected component of an $m$-cluster tilted algebra can be reduced, using local mutations (which are tiltings or cotiltings, as we shall see) to what he calls the \emph{normal form}, described as follows. Recall that $m$ is fixed. Let $Q_{r,s}$ be the quiver having $r$ cycles of length $m+2$ and $s$ vertices, depicted below: 

$$\SelectTips{eu}{10}\xymatrix@C=.3pc@R=.3pc{  & .\ar@{.}@/^/[rrr] &&& .\ar[ddr] &  & . \ar@{.}@/^/[rrr] &&& .\ar[ddr] &  &  &&&&  & . \ar@{.}@/^/[rrr] &&& . \ar[ddr] &  &&&&&&&&&&\\
&&&&&&&&&&&&&&&&&&&&  &&&&&&&&&&\\
. \ar[ruu] & &&& & . \ar[ddl] \ar[uur] &  &&&  & . \ar[ddl] \ar@{.}[rrrrr] & &&&&  .\ar[uur] & &&& & .\ar[ddl]  && .\ar[ll]  && . \ar@{.}[rrrr] \ar[ll] &&&& . && .\ar[ll]\\
&&&&&&&&&&&&&&&&&&&&  &&&&&&&&&& \\
& .\ar@{.}@/_/[rrr]  \ar[uul] &&& . &  & . \ar@{.}@/_/[rrr] \ar[uul] &&&  &  &  &&&&  & . \ar@{.}@/_/[rrr] \ar[uul] &&& .  &    &&&&&&&&&&
}$$

\noindent Let $I$ be the ideal generated by the composition of any two consecutive arrows in any of the cycles of $Q_{r,s}$, and $N_{r,s} = \K Q_{r,s}/I$ be the corresponding algebra, which is an $m$-cluster tilted algebra of type $\mathbb{A}$. Note that in \cite{M10} the definition of the normal form is slightly different. On one hand the arrows of the linear part (those at the right of the diagram) are oriented in the other way. On the other hand, the number of arrows in each cycle that lie between the vertices of degree greater than 2 is constant. This more restrictive definition is not necessary four our purpose, since this gives an algebra which is clearly tilting cotilting equivalent to ours.

Moreover, again from \cite{M10}, we know that a connected component of an algebra derived equivalent to an $m$-cluster tilted algebra is derived equivalent to a normal form  $N_{r,s}$. In case $m=1$, this form coincides with the one considered by Buan and Vatne \cite[2.3]{BB10}.

\medskip In the sequel, each oriented cycle depicted should be understood as an $(m+2)$-cycle  with full relations.

\section{Hochschild cohomology}

Given an algebra $A$, the $n$-th Hochschild cohomology group of $A$ with coefficients in the bimodule $_A A_{A}$ is the extension group $\ts{HH}^n(A)=\Ext{A-A}{^n}{A}{A}$. The sum $\ts{HH}^\ast (A) = \bigoplus_{n\geqslant 0} \ts{HH}^n(A)$ has the additional structure of a Gerstenhaber algebra, see \cite{G63}. From \cite{R89, K04} this structure is known to be a derived invariant, that is, invariant under derived equivalence. We shall use this invariant instead of the function $\phi$ of Avella-Alaminos and Geiss \cite{AG08}, invariant used in \cite{BB10} and \cite{Bastian09}.

We are interested in the computation of the cohomology groups of  $N_{r,s}$. Since the latter are schurian, one can prove, using \cite[2.2]{Hap89} for instance, that $\ts{dim}_{\ts{k}} \HH{1}{N_{r,s}} = r$. However the first cohomology group misses much information, it  takes into account the number of cycles, but does not retain their length, nor  whether they are bound with relations or not. In what follows we see that the higher cohomology groups do detect what is missed by $\HH{1}{N_{r,s}}$.

In order to carry out  the needed computations, we introduce a convenient complex that allows us to do so efficiently, see \cite{B97, Bus06-cohomo}. Let $A=\K Q /I$ be a monomial quadratic algebra. Define $\G_0=\G_0(Q)=Q_0,\ \G_1=\G_1(Q)=Q_1$, and for $n\geqslant 2$, $\G_n=\G_n(Q,I)=\{\a_1\a_2\cdots \a_n|\ \a_i\a_{i+1}\in I\}$. Moreover, let $E=\ts{k}Q_0$ be the semi-simple algebra isomorphic to $A/\rad{A}$, and $\ts{k}\G_n$ the $\ts{k}$-vector space with basis $\G_n$. The latter are also $E-E$-bimodules in an obvious way. In what follows, tensor products are taken over $E$. With these notations we have a minimal projective resolution of $_A A_A$:
$$\SelectTips{eu}{10}\xymatrix@R=5pt{\cdots \ar[r] & A\otimes \K \G_n \otimes  A\ar[r]^-{\delta_{n-1}}& A\otimes \K \G_{n-1} \otimes  A \ar[r]^-{\delta_{n-2}}&\cdots &\\
\cdots \ar[r] & A\otimes \K \G_1 \otimes  A\ar[r]^-{\delta_{0}}& A\otimes \K \G_{0} \otimes  A \ar[r]^-{\epsilon}&A \ar[r] &0}$$
where $\epsilon$ is the composition of the isomorphism $A\otimes \K \G_0 \otimes A \simeq A\otimes A$ with the multiplication of $A$, and given  the differential $\delta_{n-1}$  defined on $A\otimes \K \G_n \otimes A$ by the formula $$\delta_{n-1}(1\otimes \a_1\cdots \a_n\otimes 1) = \a_1 \otimes \a_2 \cdots \a_n\otimes 1 + (-1)^{n} 1\otimes \a_1 \cdots \a_{n-1} \otimes \a_n.$$

\begin{lem}
Let $A=\ts{k}Q/I$ be a monomial quadratic algebra. Then the Hochschild cohomology groups $\HH{n}{A}$ are the cohomology groups of the complex
$$\SelectTips{eu}{10}\xymatrix{\cdots \ar[r] & \Hom{E-E}{\ts{k}\G_n}{A}\ar[r]^{\delta^n}& \Hom{E-E}{\ts{k}\G_{n+1}}{A}\ar[r]&\cdots  }$$
where $\delta^n(f)(\a_1\a_2\cdots \a_{n+1}) = \a_1f(\a_2\cdots\a_{n+1})+(-1)^{n+1}f(\a_1 \cdots a_n)\a_{n+1} $.
\end{lem}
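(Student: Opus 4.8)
The plan is to compute $\HH{n}{A}=\Ext{A-A}{n}{A}{A}$ directly from the minimal projective bimodule resolution of $_AA_A$ displayed just above the statement, by applying the functor $\Hom{A-A}{-}{A}$ to it and identifying the resulting cochain complex with the one in the statement. Since Hochschild cohomology is, by definition, the cohomology of the complex obtained from \emph{any} projective resolution of $A$ over the enveloping algebra after applying $\Hom{A-A}{-}{A}$, and the displayed complex is such a resolution (its exactness and the projectivity of its terms being already available to us), it suffices to perform this identification degree by degree and to check that the induced coboundary agrees with the stated formula.

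First I would record the adjunction isomorphism
\[
\Hom{A-A}{A\otimes \K M \otimes A}{A}\;\simeq\;\Hom{E-E}{\K M}{A},
\]
natural in the $E$-$E$-bimodule $M$, which sends a bimodule morphism $f$ to the $E$-$E$-linear map $\tilde f$ given by $\tilde f(\omega)=f(1\otimes \omega \otimes 1)$, with inverse sending $g$ to the unique $A$-bimodule morphism determined by $1\otimes \omega \otimes 1\mapsto g(\omega)$. This is the standard tensor-hom adjunction for induction of $E$-$E$-bimodules to $A$-$A$-bimodules along $E\hookrightarrow A$, all tensor products being taken over $E$; here one uses that each $\K\G_n$ is a projective $E$-$E$-bimodule (a path $\a_1\cdots\a_n$ has well-defined source and target, so $\K\G_n$ is a direct sum of summands of the form $e_t\,\K\,e_s$), whence each $A\otimes \K\G_n\otimes A$ is a projective $A$-bimodule. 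Applying this isomorphism in every degree turns $\Hom{A-A}{A\otimes \K\G_\bullet \otimes A}{A}$ into the complex $\Hom{E-E}{\K\G_\bullet}{A}$ of the statement.

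It then remains to identify the induced differential. The coboundary on $\Hom{A-A}{-}{A}$ is precomposition with $\delta_n$, so for $f$ corresponding to $\tilde f$ I would evaluate $f\circ \delta_n$ on a generator $1\otimes \a_1\cdots \a_{n+1}\otimes 1$. Using the explicit formula for $\delta_n$ together with the $A$-bimodule linearity of $f$, the summand $\a_1\otimes \a_2\cdots \a_{n+1}\otimes 1$ contributes $\a_1\,\tilde f(\a_2\cdots \a_{n+1})$, while the summand $(-1)^{n+1}\,1\otimes \a_1\cdots \a_n\otimes \a_{n+1}$ contributes $(-1)^{n+1}\,\tilde f(\a_1\cdots \a_n)\,\a_{n+1}$; their sum is precisely $\delta^n(f)(\a_1\cdots\a_{n+1})$ as claimed.

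The computation is essentially formal; the only points requiring care are the bookkeeping of the adjunction and of the sign $(-1)^{n+1}$, and the verification that $\delta^n(f)$ genuinely lands in $\Hom{E-E}{\K\G_{n+1}}{A}$, i.e.\ that the right-hand side is $E$-$E$-linear and well defined on $\G_{n+1}$. I expect no real obstacle beyond this verification, since exactness and projectivity of the displayed resolution are taken as given.
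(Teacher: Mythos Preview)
Your proposal is correct and follows exactly the same approach as the paper: apply $\Hom{A-A}{-}{A}$ to the truncated minimal bimodule resolution and use the natural identification $\Hom{A-A}{A\otimes \K\G_n\otimes A}{A}\simeq \Hom{E-E}{\K\G_n}{A}$. The paper's proof is in fact far terser---it simply invokes these two steps without spelling out the adjunction or the differential computation---so your version adds detail (the explicit inverse of the adjunction, the verification of the sign and the formula for $\delta^n$) rather than diverging in any substantive way.
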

\begin{proof}In order to compute the Hochschild cohomology groups of the algebra $A$, we apply the functor $\Hom{A-A}{-}{A}$ to the complex obtained by truncating the resolution above. In addition we use the identification $\Hom{A-A}{A\otimes \K \G_n \otimes A}{A} \simeq \Hom{E-E}{\K \G_n}{A}$.
\end{proof}

As mentioned before, the sum $\ts{HH}^\ast (A) = \bigoplus_{n\geqslant 0} \ts{HH}^n(A)$ has additional structure given by two products, which we now describe, see \cite{G63}. The two products are defined using the standard resolution of $A$, but using appropriate explicit maps between the resolutions, see \cite[Section 2]{SF08} and \cite[Section 1]{Bus06-cohomo}, we can carry them to our context.

Given $f\in \Hom{E-E}{\K\G_n}{A}$, $g\in\Hom{E-E}{\K\G_m}{A}$, and $i\in\{1,2,\ldots n\}$, define the element $f\circ_i g$ as $f \left(\mathbbm{l}^{i-1} \otimes g \otimes \mathbbm{l}^{n-i}\right)$. In addition, define the \textit{composition product} as
$$f \circ g = \sum_{i=1}^n (-1)^{(i-1)(m-1)} f\circ_i g$$
and the \emph{bracket} to be
$$[f,g]= f\circ g - (-1)^{(n-1)(m-1)}g \circ f$$

On the other hand, denote by \mor{\sigma}{A\otimes A}{A} the multiplication of $A$. The \emph{cup-product} $f\cup g$ of $f$ and $g$ is  the  element of $\Hom{E-E}{\K \G_{n+m}}{A}$ defined by $f\cup g = \sigma(f \otimes g)$.

Recall that a Gerstenhaber algebra is a graded $\K$-vector space $A$ endowed with a product which makes $A$ into a graded commutative algebra, and a bracket $[ - ,  - ]$ of degree $−1$ that makes $A$ into a graded Lie algebra, and such that $[x, yz] = [x, y]z + (-1)^{(|x|-1)|y|} y[x, z]$, that is, a graded analogous of a Poisson algebra. The cup product $\cup$ and the bracket $[ - , - ]$ defined above define products in $\ts{HH}^\ast (A)$, which becomes then a Gerstenhaber algebra (see \cite{G63}).

\begin{ex}[Cibils, \cite{C98}]\label{ex:cycle} With the notations of the previous section, let $A=N_{1,m+2}$, that is the Nakayama algebra given by an oriented cycle of length $m+2$, whose radical is $2$-nilpotent. In \cite[4.6]{C98} Cibils gave an explicit formula for the structure of Hochschild cohomology of $A$: if the characteristic of the ground field $\ts{k}$ is  $2$, or if $m$ is even, then put $d=2(m+2)$, otherwise put $d=m+2$. The space $\K \G_d$ has dimension $d$. Define  \mor{f}{\K\G_{d}}{A} as the map that sends each basis element of $\K\G_d$ to its source-target vertex, and let $F$ be its (non zero) class in $\HH{d}{A}$. In addition, define  \mor{g}{\K  \G_{1}}{A} to be  the map that sends a particular arrow  $\a$ on the cycle  to itself  and vanishes on the other basis elements of $\K\G_{1}$, and let $G$ be its (non zero) class in $\HH{1}{A}$. Considering $F$ and $G$ as indeterminates of degrees $d$ and $1$, $\HH{\ast}{A}$ has the multiplicative structure of  $\ts{k}[F,G]/\langle G^2\rangle$. Moreover, a straightforward computation gives  that in addition the bracket is given by
\begin{center}
$\begin{array}{lllll}
[F,G] = F, &  & [F,F]=0,  &  &[G,G]=0. 
\end{array}$
\end{center}

\end{ex}

We now compute the Hochschild cohomology of the normal form $N_{r,s}$. 

\begin{prop} \label{prop:HH} \ 

\begin{enumerate}
 \item[$a)$] For any $n\in \mathbb{N}$, the Hochschild cohomology group  $\HH{n}{N_{r,s}}$ is given by 
$$\HH{n}{N_{r,s}}=\left\{\begin{array}{ll} 
                    \K 					& \mbox{ if } n=0,\\
		\prod_{j=1}^r \HH{n}{N_{1,m+2}} 	& \mbox{ otherwise.} \end{array}\right. $$
 \item[$b)$] With the notations of  example \ref{ex:cycle},  the $r$ elements of degrees $1$ and $d$ that generate $\HH{\ast}{N_{r,s}}$ as a ring are given by  $\{G_j,\ F_j|\ 1\leqslant j \leqslant r\} $ in bijective correspondence with each one of the cycles in the quiver, as in \ref{ex:cycle}.
 \item[$c)$] Each of the $F_i$'s is orthogonal to each other, and to each $G_i$, and the Lie structure is given by: 
\begin{center}
$\begin{array}{lllll}
[F_i, G_j] = \delta_{i,j}F_i, 	&  	& [F_i,F_j]=0,  				&  	& [G_i,G_j]=0 .
\end{array}$ 
\end{center}

\end{enumerate}
 \end{prop}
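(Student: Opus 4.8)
The plan is to reduce everything to the single-cycle case of Example \ref{ex:cycle} (Cibils), working with the cochain complex $\Hom{E-E}{\K\G_\bullet}{A}$ of the preceding lemma. The combinatorial heart is this: because the defining ideal of $N_{r,s}$ is generated solely by the consecutive compositions along the $r$ cycles of length $m+2$, a branch arrow never occurs in a relation, and hence every path $\a_1\cdots\a_n\in\G_n$ with $n\geqslant 2$ lies entirely on one of the $r$ cycles. This yields an $E$-$E$-bimodule decomposition $\K\G_n=\bigoplus_{j=1}^r \K\G_n^{(j)}$ for $n\geqslant 2$, where $\G_n^{(j)}$ is exactly the set $\G_n$ of the $j$-th copy of $N_{1,m+2}$. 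Alongside it I would prove a \emph{no-return} lemma: since the cycles of $Q_{r,s}$ meet only at cut vertices while the remaining arrows form trees, any nonzero path of $N_{r,s}$ with source and target on a fixed cycle $C_j$ already lies on $C_j$, for a detour into a neighbouring block would have to traverse that block completely and so vanish under the full relations.

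With these two facts part $a)$ is bookkeeping on the complex. For $n\geqslant 2$ the no-return lemma forces the value $f(p)$ of a cochain on $p\in\G_n^{(j)}$ to be supported on $C_j$, so that $\Hom{E-E}{\K\G_n}{A}\simeq\bigoplus_j\Hom{E-E}{\K\G_n^{(j)}}{N_{1,m+2}}$, compatibly with the differentials $\delta^n$, which only multiply by an arrow of the same cycle; thus in degrees $\geqslant 2$ the complex is the direct sum of the $r$ single-cycle complexes. The incoming differential $\delta^1$ evaluates a cochain only on cycle arrows, a branch arrow never appearing in a $\G_2$ element, so $\mathrm{im}\,\delta^1$ splits as $\bigoplus_j\mathrm{im}\,\delta^{1,(j)}$ as well; this gives $\HH{n}{N_{r,s}}\simeq\prod_{j=1}^r\HH{n}{N_{1,m+2}}$ for $n\geqslant 2$. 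In degree $1$ I would invoke the already-quoted equality $\ts{dim}_\K\HH{1}{N_{r,s}}=r$: the $r$ cocycles $g_j$, each sending one chosen arrow of $C_j$ to itself and vanishing elsewhere, are independent in cohomology and hence span, giving the same product; and $\HH{0}{N_{r,s}}=\K$ as $N_{r,s}$ is connected.

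Parts $b)$ and $c)$ are then assembled from Example \ref{ex:cycle} through this degree-wise identification. As each factor $\HH{\ast}{N_{1,m+2}}$ is generated by its classes $G_j$, $F_j$ of degrees $1$ and $d$, the $2r$ classes $\{G_j,F_j\}$ generate $\HH{\ast}{N_{r,s}}$, one pair per cycle, which is $b)$. For $c)$, the within-cycle relations $[F_j,G_j]=F_j$ and $[F_j,F_j]=[G_j,G_j]=0$, together with the ring $\K[F_j,G_j]/\langle G_j^2\rangle$, are Cibils' formulas transported through the identification. The assertions relating two distinct cycles are a support argument: representatives of the classes attached to $C_i$ and $C_j$ with $i\neq j$ are cochains supported on $\G_\bullet^{(i)}$ and $\G_\bullet^{(j)}$; since every element of $\G_\bullet$ lies on a single cycle, in the cup product $f\cup g$ and in each operation $f\circ_k g$ one of the two factors is invariably evaluated on a path of the wrong cycle and vanishes. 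This yields $F_iF_j=F_iG_j=G_iG_j=0$ and $[F_i,F_j]=[F_i,G_j]=[G_i,G_j]=0$ for $i\neq j$, completing the multiplicative and Lie structure.

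I expect the genuinely delicate step to be the low-degree analysis — degrees $0,1$ and the incoming $\delta^1$ — where branch arrows are present and the clean decomposition of $\G_n$ is unavailable, so that one must check directly that the branches produce no spurious cohomology and that the cycle-arrow part of the degree-$1$ term reproduces the single-cycle complex. Everything rests on the no-return lemma, which is where the block structure of $Q_{r,s}$ — a tree of $(m+2)$-cycles joined at cut vertices, carrying tree branches — is really used.
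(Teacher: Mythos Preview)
Your argument is correct, and for parts $b)$ and $c)$ it is in fact more explicit than the paper, which essentially leaves the ring and Lie structure to the reader once $a)$ is established. The route to $a)$, however, differs from the paper's in two ways. First, the paper strips off the linear branch at the outset via Happel's long exact sequence for one-point extensions, reducing $N_{r,s}$ to the branchless algebra $N_r$; you instead keep the branch throughout and neutralise it by observing that branch arrows never lie in $\G_n$ for $n\geqslant 2$, so that $\delta^1$ only evaluates cochains on cycle arrows. Second, once only cycles remain, the paper packages the comparison as a short exact sequence of complexes $0\to K_\bullet\to\bigoplus_j\K\G^j_\bullet\to\K\G_\bullet\to 0$, with $K_\bullet$ concentrated in degree $0$ and recording the $r-1$ shared vertices, and then reads off the result from the associated long exact sequence in cohomology; you split the cochain complex directly in degrees $\geqslant 2$ via the no-return lemma and treat $\mathrm{im}\,\delta^1$ by hand. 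Both arguments rest on the same combinatorics (elements of $\G_n$ for $n\geqslant 2$ live on a single cycle, and paths parallel to a cycle stay on it --- this last fact the paper uses implicitly in its ``natural isomorphism'') and both close degree $1$ with the same quoted equality $\dim_{\K}\HH{1}{N_{r,s}}=r$. Your version is more elementary, avoiding both Happel's sequence and the long exact sequence, and it makes the compatibility with cup product and bracket transparent at the cochain level, which is exactly what $b)$ and $c)$ need; the paper's packaging handles the degree-$0$/degree-$1$ interface more uniformly. One small imprecision: your justification of the no-return lemma (``a detour \ldots\ would have to traverse that block completely and so vanish under the full relations'') is right for a neighbouring cycle but not literally for the tree branch, where there are no relations; there the correct reason is simply that the branch arrows are oriented toward the cycle, so no path starting on the cycle can enter the branch at all.
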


\begin{proof}
 First of all, note that using  Happel's long exact sequence \cite[5.3]{Hap89} one sees that the Hochschild cohomology of $N_{r,s}$ is precisely  that of the case in which every vertex of $Q$ belongs to a cycle, say $N_{r}$ for short. 

We now exhibit a short exact sequence of complexes from which the result will follow.

To begin, label the cycles of $Q$ by $1,2,\ldots,r$, from right to left. For $j\in\{1,\ldots,r\}$, let $Q^j$ be the full subquiver of $Q$ generated by the arrows of the $j^{th}$ cycle. In addition, let $I^j$ be the restriction of $I$ to $\K Q^j$, and $A^j=\K Q^j / I^j$ the corresponding algebra, which is isomorphic to $N_1 = N_{1,m+2}$, and $E^j =\ K Q^j_0$, its semi-simple part. In addition, for $j<r$,  let $v_j$ be the vertex belonging to $Q^j$ and $Q^{j+1}$.  Accordingly, for $n\geqslant 0$, define $\G_n^j = \G_n(Q^j,I^j)$. It follows directly from the definition  that $\G_n = \coprod_{j=1}^r \G_n^j$ for $n\geqslant 1$.

Now, for $n\geqslant 0$, let \mor{\phi_n}{\underset{1\leqslant j \leqslant r}{\bigoplus} \K \G_n^j}{\K \G_n} be the sum of the morphisms induced by the inclusions of the $\G_n^j$ into $\G_n$, so that $\phi_\bullet$ is a morphism of complexes. For $n\geqslant 1,\ \phi_n$ is an isomorphism, and the kernel $K_0$ of  $\phi_0$  is the $(r-1)$-dimensional vector space $\K v_1\oplus \cdots \oplus \K v_{r-1} $. We thus have a short exact sequence of complexes
$$\SelectTips{eu}{10}\xymatrix{0 \ar[r] & K_\bullet \ar[r]^-{\iota_\bullet}&\displaystyle \bigoplus_{j=1}^r \K \G_\bullet^j\ar[r]^{\phi_\bullet}&\K \G_\bullet\ar[r]&0 }$$
with $K_\bullet$ concentrated in degree $0$. Upon applying the functor $\Hom{E-E}{-}{A}$, and using the natural isomorphism 
$$\Hom{E-E}{\K\G_n}{A} \simeq \bigoplus_{j=1}^r \Hom{E^j-E^j}{\K\G_n^j}{A^j}.$$
we obtain a long exact sequence of cohomology groups
$$ \SelectTips{eu}{10}\xymatrix@C=10pt@R=1pt{0 \ar[r]&\HH{0}{N_{r,s}}\ar[r] &\displaystyle \bigoplus_{j=1}^r \HH{0}{A^j}\ar[r]&\displaystyle \bigoplus_{j=1}^{r-1} \HH{0}{\K v_j}\ar[r]& \HH{1}{N_{r,s}}\ar[r] &\displaystyle \bigoplus_{j=1}^r \HH{1}{A^j}\ar[r]&0\ar[r]&\cdots\\
&& \cdots0\ar[r]& \HH{i}{N_{r,s}} \ar[r] & \displaystyle \bigoplus_{j=1}^r \HH{i}{A^j}\ar[r]&0\cdots& &    }$$
Since the dimension of $\HH{1}{N_{r,s}}$ equals that of $\underset{1\leqslant j \leqslant r}{\bigoplus} \HH{1}{A^j}$, we are done.
\end{proof}

\begin{rems}\label{rems:core}\ 

\begin{enumerate}
 \item[$a)$] As in example \ref{ex:cycle}, the non-nilpotent elements of degree $d$ in the Hochschild cohomology ring correspond to the cycles of length $m+2$ with full relations. This is also true in a more general context, that of monomial quadratic algebras, a class which includes gentle algebras.
 \item[$b)$] In the same way,  the elements of degree $1$ in the Hochschild cohomology ring of $N_{r,s}$  correspond to the choice of an arrow in each such cycle.  Again, this holds in the more general setting of monomial algebras. Indeed any choice of an arrow $\a$ in each cycle $\c$ --  oriented or not -- of $\overline{Q}$ (the non-oriented graph associated to $Q$) gives rise to an element in $\HH{1}{A}$. More precisely the cycle $\c$ gives an element $\tilde{\c}$ in the fundamental group $\pi_1(Q,I)$ of the bound quiver $(Q,I)$, $\pi_1(Q,I)$ (see \cite{AdlP96}) which induces a character $\tilde{\c}^\ast$ in $\Hom{}{\pi_1(Q,I)}{\K^+}$. The required derivation is then the image of  $\tilde{\c}^\ast$ by the monomorphism \mor{s}{\Hom{}{\pi_1(Q,I)}{\K^+}}{\HH{1}{A}} of \cite{AdlP96} (see also \cite{PS01}).

\end{enumerate}
\end{rems}

\section{Branched algebras}
As in the case $m=1$ the class of $m$-cluster tilted algebras of type $\mathbb{A}$ is not closed under derived equivalence; that is,  it is possible for an $m$-cluster tilted algebra  $(m\geqslant 2)$ to be derived equivalent to an algebra which is not $m$-cluster tilted. 

\begin{ex}[Remark 4.9, \cite{M10}]\label{ex:branched}
Consider the following quiver $Q$:
$$\SelectTips{eu}{10}\xymatrix@C=.4pc@R=.2pc{ & & . \ar[ddrr]^{\c_5} & & &&&&&&&&&&& &. \ar[ddrr]^{\b_1}&&\\
&  & & & &&&&&&&&&&&&&&\\
. \ar[uurr]^{\c_3}& & & & .\ar[ddl]^{\c_0} & & . \ar[ll]_{\a_4} & & .\ar[ll]_{\a_3} & & .\ar[ll]_{\a_2} & & .\ar[ll]_{\a_1}  & & . \ar[ll]_{\a_0}  \ar[uurr]^{\b_0} & &&& .\ar[ddl]^{\b_2}\\
&  & & & &&&&&&&&&&&&&&\\
& .\ar[uul]^{\c_2}  & & . \ar[ll]^{\c_1}  & &&&&&&&&&&& . \ar[luu]^{\b_4} & & . \ar[ll]^{\b_3} &
}$$
Let $I_1$ be the ideal generated by relations of the form $\c_i\c_{i+1}$ and $\b_i\b_{i+1}$ for $i\in\{0, 1,2,3,4\}$ where indices are to be read modulo $5$. Then $\K Q/I_1$ is a $3$-cluster tilted algebra of type $\mathbb{A}_{14}$. On the other hand let $I_2$ be $I_1$ plus the ideal generated by relations $\a_i \a_{i+1}$. The algebra $\K Q /I_2$ is not $3$-cluster tilted, but the two algebras are derived equivalent, as we shall see.  
\end{ex}

We are interested in algebras derived equivalent to $m$-cluster tilted algebras of type $\mathbb{A}$.  A first result is the following.

\begin{prop}\label{prop:HH-branched}
 Let $A=\K Q/I$ be an $m$-cluster tilted algebra of type $\mathbb{A}$, and $B$ an algebra derived equivalent to $A$. Then
\begin{enumerate}
 \item[a)]  The quivers of $A$ and $B$ have the same Euler characteristic, and
 \item[b)] In both cases each cycle is an $(m+2)$-cycle with full relations.
\end{enumerate}
\end{prop}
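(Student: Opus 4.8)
The plan is to use the Hochschild cohomology ring, with its full Gerstenhaber structure, as the bridge between $A$ and $B$, exploiting that it is a derived invariant and that it is completely computed for the normal form in Proposition~\ref{prop:HH}. First I would record the reductions. Since $A$ is $m$-cluster tilted of type $\mathbb{A}$, Murphy's results show that $A$ is gentle, that every cycle of $Q_A$ is an $(m+2)$-cycle with full relations, and that $A$ is derived equivalent to a normal form $N_{r,s}$ with $r=\chi(Q_A)$ (the $(m+2)$-cycles account for all of $H_1(\overline{Q_A})$). Because gentle algebras are closed under derived equivalence \cite{SZ03} and $A$ is gentle, $B=\K Q_B/I_B$ is again gentle, hence monomial quadratic, so the tools of Section 2 apply to it. Finally, as $\HH{\ast}{-}$ together with its Gerstenhaber structure is invariant under derived equivalence \cite{R89, K04}, there is an isomorphism of Gerstenhaber algebras $\HH{\ast}{B}\simeq \HH{\ast}{A}\simeq \HH{\ast}{N_{r,s}}$, and the right-hand side is the explicit ring of Proposition~\ref{prop:HH}: it has exactly $r$ non-nilpotent generators $F_1,\dots,F_r$, all in a single degree $d=d(m+2)$, exactly $r$ classes $G_1,\dots,G_r$ in degree $1$ with $\dim \HH{1}{N_{r,s}}=r$, and Lie bracket $[F_i,G_j]=\delta_{i,j}F_i$.

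For part (b) I would first count and locate the non-nilpotent generators. By Remark~\ref{rems:core}(a), for the gentle algebra $B$ these generators are in bijection with the oriented cycles of $Q_B$ carrying full relations, and by Cibils' computation (Example~\ref{ex:cycle}) a full-relation cycle of length $\ell$ contributes its generator in degree $d(\ell)$, the value of Example~\ref{ex:cycle} for length $\ell$. Matching against $N_{r,s}$ forces $B$ to have exactly $r$ full-relation cycles, all producing generators in degree $d(m+2)$; since $d(\ell)=d(m+2)$ implies $\ell=m+2$, each such cycle has length $m+2$. To exclude cycles that do \emph{not} carry full relations, I would use the Lie structure rather than mere dimensions: by Remark~\ref{rems:core}(b) any cycle of $\overline{Q_B}$ yields a degree-$1$ class, and a class coming from a cycle that is not a full-relation oriented $(m+2)$-cycle would have $[F_j,x]=0$ for every $j$; but in $\HH{\ast}{N_{r,s}}$ the relation $[F_j,\sum_i c_i G_i]=c_j F_j$ shows that the only degree-$1$ element $x$ with $[F_j,x]=0$ for all $j$ is $x=0$. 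Hence no such extra class exists, and every cycle of $Q_B$ is one of the $r$ full-relation $(m+2)$-cycles.

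For part (a) I would combine a lower and an upper bound for $\chi(Q_B)$. The $r$ full-relation $(m+2)$-cycles located above give $r$ independent classes in $\HH{1}{B}$ through the monomorphism $s$ of Remark~\ref{rems:core}(b), whence $\chi(Q_B)\geq r$; conversely, since for a monomial algebra $\pi_1(Q_B,I_B)$ is free, that same monomorphism $s\colon \Hom{}{\pi_1(Q_B,I_B)}{\K^+}\to \HH{1}{B}$ gives $\chi(Q_B)\leq \dim \HH{1}{B}=r$. Therefore $\chi(Q_B)=r=\chi(Q_A)$, and in fact $s$ is an isomorphism, so the full-relation cycles form a complete system of independent cycles, which re-confirms part (b).

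The step I expect to be the main obstacle is the precise handling of the degree-$1$ part: making rigorous that, for a gentle algebra, the monomorphism $s$ is an isomorphism (equivalently $\dim \HH{1}{B}=\chi(Q_B)$), so that the counting in part (a) is tight and the Lie-theoretic exclusion in part (b) really accounts for all degree-$1$ classes. A secondary technical point is verifying $d(\ell)=d(m+2)\Rightarrow \ell=m+2$ uniformly in the characteristic, which is exactly the feature letting this invariant read off the cycle length and see the case $\operatorname{char}\K=2$.
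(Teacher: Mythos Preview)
Your proposal and the paper share the same ingredients---derived invariance of $\HH{\ast}{-}$, gentleness of $B$ via \cite{SZ03}, and Remark~\ref{rems:core}---but the paper organises them more simply and never touches the Lie bracket. For part~(a) the paper just asserts that for a gentle bound quiver without multiple arrows one has $\dim{\HH{1}{A}}=\chi(Q)$, so derived invariance of $\HH{1}{-}$ gives $\chi(Q_A)=\chi(Q_B)$ at once; this equality is precisely what you flag as ``the main obstacle'', and your sandwich $\chi(Q_B)\leqslant\dim{\HH{1}{B}}=r$ (via the monomorphism $s$) together with $\chi(Q_B)\geqslant r$ (from the $r$ arrow-disjoint full-relation cycles) is a correct but circuitous reproof of the same fact. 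For part~(b) the paper uses only the cup-product structure, via Remark~\ref{rems:core}(a): the non-nilpotent generators of $\HH{\ast}{B}$ and their common degree force exactly $r$ full-relation cycles of length $m+2$; since by condition~G3 these are pairwise arrow-disjoint and hence, by~(a), already a basis of $\SH{\overline{Q_B}}$, no other oriented cycle can occur.

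Your Lie-bracket step in~(b) has a genuine gap. The claim that a degree-$1$ class $x$ coming from a cycle $c$ other than the $\gamma_j$ must satisfy $[F_j,x]=0$ for every $j$ is not justified: nothing prevents $c$ from sharing arrows with some $\gamma_j$, and then the explicit formulas for $\circ_i$ give no reason for the bracket to vanish. Worse, even if you did obtain $x=0$, that alone would not exclude the cycle $c$; you would still need that independent cycles inject into $\HH{1}{B}$, which is again the identity $\dim{\HH{1}{B}}=\chi(Q_B)$ you were trying to circumvent. So the bracket buys nothing here: once $\dim{\HH{1}{-}}=\chi(Q)$ is in hand, Remark~\ref{rems:core}(a) plus elementary graph homology finish both parts.
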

\begin{proof}\ 
\begin{enumerate}
 \item[$a)$] The bound quivers we are interested in are gentle and have no multiple arrows. In this context, the dimensions of the first Hochschild cohomology groups of $A$ and $B$ are precisely the Euler characteristics of $Q_A$ and $Q_B$. The invariance of these cohomology groups gives the result (compare with remark \ref{rems:core} $a)$).
 \item[$b)$] For the second statement, it is enough to see that remark  \ref{rems:core} $b)$ still holds : the Hochschild cohomology ring structure encodes the number of $(m+2)$-cycles with full relations. 
\end{enumerate}
\end{proof}

Remark that, alternatively, we could have invoked proposition B from \cite{AG08}, which shows that the number of arrows in the quiver of $A=\K Q /I$ is a derived invariant for gentle algebras, and thus so is the Euler characteristic $\chi(Q)$. However, this result is obtained by means of the map $\phi$, and our approach resides in the use of the Hochschild cohomology groups instead.

\medskip

This motivates the following definition.

\begin{de}\label{de:def-branched} We say that an algebra $A=\K Q/I$ is \textbf{branched} (or, more precisely, $m$-branched) if $(Q,I)$ is a  gentle bound  quiver which contains exactly $\chi (Q)$ oriented cycles of length $m+2$, each of them with full relations.
\end{de}

For instance, the algebras of example \ref{ex:branched} are both branched. Also, it follows from  \cite[2.20]{M10} that an $m$-branched algebra $B=\K Q/I$ such that every relation of $(Q,I)$ lies in a cycle of $Q$ is itself an $m$-cluster tilted algebra of type $\mathbb{A}$. 

\medskip The following proposition follows immediately, by reinterpreting  \ref{prop:HH-branched} with the terminology introduced in the previous definition.

\begin{prop}\label{prop:branched-derived}\ 
 \begin{enumerate}
  \item[$a)$] An algebra $B$ derived equivalent to the normal form $N_{r,s}$ is $m$-branched, with $r$ oriented cycles, moreover its quiver has $s$ vertices;
  \item[$b)$] If an algebra $B$ is derived equivalent to an $m$-cluster tilted algebra of type $\mathbb{A}$, then each connected component of $B$ is $m$-branched.
 \end{enumerate}
\end{prop}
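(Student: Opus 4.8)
The plan is to derive both statements from Proposition~\ref{prop:HH-branched} together with the derived invariance of the Hochschild cohomology ring and of $K_0$, so that, as announced, the result is essentially a reinterpretation in the language of Definition~\ref{de:def-branched}.

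For $a)$, I would start from the observation that the normal form $N_{r,s}$ is itself an $m$-cluster tilted algebra of type $\mathbb{A}$, so Proposition~\ref{prop:HH-branched} applies verbatim with $N_{r,s}$ in the role of the reference algebra and $B$ as the algebra derived equivalent to it. Since $N_{r,s}$ is gentle and gentleness is preserved under derived equivalence \cite{SZ03}, $B=\K Q_B/I_B$ is gentle; by part $a)$ of Proposition~\ref{prop:HH-branched} one has $\chi(Q_B)=\chi(Q_{r,s})=r$, and by part $b)$ every oriented cycle of $Q_B$ is an $(m+2)$-cycle with full relations. The one point that still has to be checked in order to match Definition~\ref{de:def-branched} is that the \emph{number} of such cycles is exactly $\chi(Q_B)$. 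This I would obtain from the ring structure: by Proposition~\ref{prop:HH} and Remark~\ref{rems:core}~$a)$ --- valid for the monomial quadratic, hence gentle, algebra $B$ --- the non-nilpotent generators of $\HH{\ast}{B}$ in degree $d$ are in bijection with the oriented $(m+2)$-cycles with full relations, and the graded ring isomorphism $\HH{\ast}{B}\simeq \HH{\ast}{N_{r,s}}$ forces their number to equal $r$. Combining this with $\chi(Q_B)=r$ shows that $B$ contains exactly $\chi(Q_B)$ oriented cycles of length $m+2$, each with full relations, i.e. $B$ is $m$-branched with $r$ cycles. The vertex count is immediate: the number of vertices equals the rank of $K_0(B)$, a derived invariant, so $|Q_{B,0}|=|Q_{r,s,0}|=s$.

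For $b)$, I would reduce to $a)$ by passing to connected components. Writing $A$ and $B$ as products of their connected components, the essential ingredient is that a triangle equivalence respects the orthogonal decomposition of $\mathcal{D}^b(-)$ indexed by connected components --- equivalently the block decomposition carried by the center $\HH{0}{-}$ --- so that it induces a bijection between the components of $A$ and of $B$ under which corresponding components are derived equivalent. Thus each connected component $B_j$ of $B$ is derived equivalent to a connected component $A_i$ of $A$; by Murphy's reduction \cite{M10} each $A_i$ is in turn derived equivalent to a normal form $N_{r_i,s_i}$, whence $B_j$ is derived equivalent to $N_{r_i,s_i}$ and is $m$-branched by $a)$.

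The main obstacle, and the only genuinely new input beyond Proposition~\ref{prop:HH-branched}, is the component-matching used in $b)$: one must justify that derived equivalence of the (a priori disconnected) algebras descends to a derived equivalence of corresponding connected components. This rests on the uniqueness of the decomposition of a triangulated category into indecomposable triangulated summands, together with the fact that $\mathcal{D}^b(C)$ is connected precisely when $C$ is connected; once this is granted, the rest is the bookkeeping of $a)$, whose crux is the combination of the degree-$1$ and degree-$d$ parts of $\HH{\ast}{}$ to read off both $\chi(Q_B)$ and the number of cycles as the same integer $r$.
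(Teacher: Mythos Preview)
Your proposal is correct and for part $a)$ coincides with the paper, which simply records the proposition as an immediate reinterpretation of Proposition~\ref{prop:HH-branched} in the language of Definition~\ref{de:def-branched}, adding no further argument.

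For part $b)$ you take a slightly different route. Your detour through the block decomposition of $\mathcal{D}^b(-)$, the induced bijection on connected components, and Murphy's reduction to normal forms is sound, but it is more than the paper needs: Proposition~\ref{prop:HH-branched}~$b)$ already asserts that \emph{every} cycle of $Q_B$ is an $(m+2)$-cycle with full relations, and this passes verbatim to each connected component. Since gentleness (condition G3) forces distinct cycles with full relations to be arrow-disjoint, each component automatically contains exactly $\chi$ such cycles and is therefore $m$-branched --- no component-matching or appeal to the normal forms is required. Your approach has the merit of reading the cycle count directly off the degree-$d$ part of $\HH{\ast}{}$ rather than from this combinatorial observation, at the cost of the extra bookkeeping you correctly flag as the ``main obstacle''.
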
\qed

One of the main results of this paper says that the converse also holds true, namely that the algebras derived equivalent to $m$-cluster tilted algebras of type $\mathbb{A}$ are precisely the $m$-branched algebras. To prove this it is enough to show that any $m$-branched algebra is derived equivalent to an $m$-cluster tilted algebra of type $\mathbb{A}$. Moreover, since the family of normal forms $N_{r,s}$ contains a representative of each class of derived equivalence of $m$-cluster tilted algebras, it is enough to show that any $m$-branched algebra is derived equivalent to an algebra $N_{r,s}$.  For this, we introduce a somehow intermediate class, see \cite[2.20]{M10}.

\begin{de} A branched algebra $A= \K Q/I$ is said to be \emph{$\mathbb{A}$-branched} (or $\mathbb{A}$-branched  with $(m+2)$-cycles to be more precise) if every relation of $(Q,I)$ lies in a (oriented) cycle of $Q$. 
\end{de}

Note that since a branched algebra is gentle, then being $\mathbb{A}$-branched implies that the quiver obtained from $Q$ by deleting all the arrows in each $(m+2)$-cycle is the union of quivers of type $\mathbb{A}$.  As an example, every normal form $N_{r,s}$, is of this type, and in addition, it follows from \cite[Section 2]{M10} that if $A$ is $\mathbb{A}$-branched, then it is a connected component of an $m$-cluster tilted algebra. We thus have:
\begin{lem}\label{lem:branched-derived-normal} Let $A=\K Q/I$ be an $\mathbb{A}$-branched algebra. Then it is derived equivalent to a normal form $N_{r,s}$.
 \end{lem}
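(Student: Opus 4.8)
The plan is to place $A$ inside the class of $m$-cluster tilted algebras, quote Murphy's reduction to normal form, and then reconcile his notion of normal form with ours. To fix the target, write $r := \chi(Q)$ for the number of oriented $(m+2)$-cycles of $Q$, which by Definition \ref{de:def-branched} is exactly the Euler characteristic, and set $s := |Q_0|$. These are the parameters of the $N_{r,s}$ we aim for, so the content of the lemma is that $A$ is derived equivalent to \emph{this particular} normal form.

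The first and essential step is to observe that, because $A$ is $\mathbb{A}$-branched, every relation of $(Q,I)$ lies in one of the $(m+2)$-cycles, so deleting the arrows of these cycles leaves a disjoint union of linear quivers of type $\mathbb{A}$. As recorded after the definition of an $\mathbb{A}$-branched algebra, this is precisely the situation treated in \cite[Section 2]{M10}, from which it follows that $A$ is (isomorphic to) a connected component of an $m$-cluster tilted algebra of type $\mathbb{A}$. This is the step on which everything rests, and the one I expect to be the main obstacle: it requires matching the combinatorics of an arbitrary $\mathbb{A}$-branched quiver with the quivers that actually arise as endomorphism algebras of $m$-cluster tilting objects, and it is exactly where the hypothesis ``every relation lies in a cycle'' is genuinely used.

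Once $A$ is known to be a connected component of an $m$-cluster tilted algebra, I would invoke Murphy's result that any such component can be transformed into his normal form by a finite sequence of local mutations, each of which is a tilting or a cotilting. By Happel's theorem \cite{H88} a tilting-cotilting equivalence is a derived equivalence, so $A$ is derived equivalent to Murphy's normal form. Since derived equivalence preserves the rank of the Grothendieck group, the number of vertices stays equal to $s$; and by Proposition \ref{prop:HH-branched} (compare remark \ref{rems:core}) the number of $(m+2)$-cycles with full relations is a derived invariant, so the normal form reached has exactly $r$ cycles.

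It then remains to pass from Murphy's normal form to ours. As explained in Section 1, the two differ only in the orientation of the arrows of the linear part and in the requirement that the cycles be equally spaced, and this discrepancy is absorbed by an explicit tilting-cotilting equivalence. Composing this with the equivalences above yields $A$ derived equivalent to $N_{r,s}$, which is the assertion. The only non-formal ingredient is the identification in the second paragraph; the remaining steps are invariance bookkeeping together with citations of \cite{M10} and \cite{H88}.
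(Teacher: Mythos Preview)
Your proposal is correct and follows essentially the same route as the paper: identify an $\mathbb{A}$-branched algebra with a connected component of an $m$-cluster tilted algebra via \cite[Section 2]{M10}, then apply Murphy's reduction to normal form (which, as noted in Section~1, is tilting-cotilting equivalent to ours). The paper's own proof is in fact just a ``\qed'' after the paragraph preceding the lemma; your additional bookkeeping on the parameters $r=\chi(Q)$ and $s=|Q_0|$ via derived invariants is correct but goes slightly beyond what the lemma as stated requires.
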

\qed
% \begin{proof} See \cite[Theorem 4.1]{M10}
% \end{proof}

In order to complete the characterization, it remains to show that  every branched algebra is derived equivalent to an $\mathbb{A}$-branched algebra,  that is, that we can eliminate the relations that lie outside the cycles. This is done in section \ref{sec:remove-relations}, but before that we need  preliminary constructions.

%\section{Removing relations}

\section{Brenner  - Butler tilting modules}%\addtocounter{thm}{1} 
Let $(Q,I)$ be a gentle bound quiver without loops and $x\in Q_0$ such that whenever there is an arrow leaving $x$, say \mor{\a}{x}{?} then there is an arrow entering $x$, say \mor{\b}{?}{x} such that $\b \a\not\in I$. This includes for instance the vertices that are not the source of any arrow, but excludes the sources of $Q$. The vertex $x$ has at most two arrows leaving it, say $\a_0$ and $\a_1$, and let $\b_0, \b_1$ be the arrows such that $\b_i\a_i \not\in I$, for $i\in\{0,1\}$. In addition, for each $i$, there exists at most one arrow $\c_{i+1}$ such that $\c_{i+1} \b_i \in I$. Note that, reading indices modulo $2$, since  the algebra is gentle, we have $\a_i \b_{i+1},\ \c_{i+1}\b_i \in I$.

In \cite{AH81} Assem and Happel showed that an algebra whose quiver is a gentle tree is tilting-cotilting equivalent to an hereditary algebra of type $\mathbb{A}$. This had been done by explicitly giving a sequence of tilting and cotilting modules. At each stage the gentle tree is transformed until the quiver $\mathbb{A}$ is reached. We will exhibit an analogous process, called \emph{``elementary transformation over a vertex''} in \cite[Section 7]{AA07}, see also \cite[Section 2]{BB10}.

\begin{de}
Let $(Q,I)$ be a gentle bound quiver, and $x$ as above. With these notations the bound quiver obtained by \emph{mutating} $(Q,I)$ at $x$ is the bound quiver defined by $(Q',I')=\sigma_x(Q,I)$ where:
\begin{itemize}
 \item $Q'_0 =Q_0$,
 \item $Q'_1 = Q_ 1 \backslash\{\a_i,\b_i,\c_i|i=0,1\}\cup \{\a'_i,\b'_i,\c'_i|i=0,1\}$ such that \mor{\a'_i}{b_i}{a_i}, \mor{\b'_i}{x}{b_i}, \mor{\c'_i}{c_i}{x}.
\end{itemize}
Let $\mathcal{R}$ be a minimal set of relations generating $I$, in particular it contains $\a_i \b_{i+1},\ \b_i\c_{i+1}$. Let $\mathcal{R}'$ be  obtained by replacing in $\mathcal{R}$ the latter by  $\b'_{i+1}\a'_i$, $\c'_{i+1}\b'_i$ for $i=0,1$, and, again, indices are to be  read modulo 2.
\end{de}

 In the sequel, a dotted line joining two arrows %($\SelectTips{eu}{10}\xymatrix@1@C=7pt{\cdot \ar[rr] & \ar@{.}@/^/[rr] & \cdot \ar[rr] & & \cdot}$) 
means, as usual, that their composition belongs to $\mathcal{R}$.

\begin{center} 

 \begin{tabular}{ccccccc}
 $\SelectTips{eu}{10}\xymatrix@C=.3pc@R=.2pc{    a_0  & && & & & b_1  \ar[dddlll]^{\b_1}  & \ar@/^/@{.}[lld]&& c_0  \ar[lll]_{\c_0} \\
&&&&&&&&&\\
&& \ar@/^/@{.}[rr]&&&&&&& \\
&&& x
 \ar[uuulll]^{\a_0} \ar[dddlll]_{\a_1} &&&&&&\\
&&\ar@/_/@{.}[rr]&&&&&&& \\
&&&&&&&&&\\
  a_1  & && & & & b_1  \ar[uuulll]_{\b_0} &\ar@/_/@{.}[llu] && c_1  \ar[lll]^{\c_1} }$&  &&& & & $\SelectTips{eu}{10}\xymatrix@C=.3pc@R=.2pc{  a_0 && \ar@/_/@{.}[rrd]& b_1   \ar[lll]_{\a_0'} & && & & & c_0  \ar[dddlll]^{\c_0'} \\
&&&&&&&&&\\
&&&&& \ar@/^/@{.}[rr]&&&& \\
&&&&&& x \ar[uuulll]^{\b_1'} \ar[dddlll]_{\b_0'} &&&\\
&&&&&\ar@/_/@{.}[rr]&&&& \\
&&&&&&&&\\
 a_1 & &\ar@/^/@{.}[rru] & b_1  \ar[lll]^{\a_1'} & && & & & c_1  \ar[uuulll]_{\c_1'}}$\\
 &  & &&& &\\
$(Q,I)$ & &&& & & $\s_x(Q,I)$.
   \end{tabular}
\end{center}
\medskip 

In the setting above, the almost complete tilting module $\overline{T}=\underset{y\not=x}{\bigoplus} P_y$ is sincere, so by \cite{HU89} there are exactly two indecomposables  $X$ such that $\overline{T}\oplus X$ is a tilting module. One of them is $P_x$. The other is the cokernel $X'$ of the left minimal $(\ts{add}-\overline{T})$-approximation  $P_x \to P_{c_0}\oplus P_{c_1}$ of $P_x$. One can compute (see \cite[II]{ASS06}, for instance) that $X'\simeq \tau^{-1}S_x$. This leads to the known result:

\begin{lem}[\cite{BB80}] Let $A=\K Q/I$ be a gentle algebra and $x\in Q_0$ as above. Then
\begin{enumerate}
 \item[$a)$] The module $T_x = \tau^{-1} S_x \oplus \overline{T}=\underset{y\not=x}{\bigoplus} P_y$ is a tilting $A$-module;
 \item[$b)$] The quiver of $\End{A}{T_x}$ is precisely $\sigma_x(Q,I)$.
\end{enumerate}
\end{lem}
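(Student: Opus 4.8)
I would treat the two parts separately; part $(a)$ is essentially a repackaging of the discussion preceding the statement. There the almost complete tilting module $\overline{T}=\bigoplus_{y\neq x}P_y$ was shown to be sincere, so by \cite{HU89} it has exactly two complements: one is $P_x$, recovering the tilting module $A_A$, and the other is the cokernel $X'$ of the minimal left $(\ts{add}\ \overline{T})$-approximation $P_x\to P_{c_0}\oplus P_{c_1}$, already identified with $\tau^{-1}S_x$. Since adjoining either complement to $\overline{T}$ produces a tilting module, $T_x=\tau^{-1}S_x\oplus\overline{T}$ is a tilting $A$-module, which is $(a)$.

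For $(b)$ I would compute the bound Gabriel quiver of $B=\End{A}{T_x}$ directly. Its vertices are the indecomposable summands of $T_x$, namely the $P_y$ with $y\neq x$ together with $M:=\tau^{-1}S_x$; attaching the label $x$ to $M$ identifies the vertex set with $Q_0=Q'_0$. An arrow between two summands is an irreducible morphism between them, counted by the radical modulo radical-square of the relevant space of homomorphisms. As $\Hom{A}{P_y}{P_z}$ is untouched for $y,z\neq x$, every arrow of $Q$ avoiding $x$ persists, and the computation localises to the morphisms among $M$ and the projectives $P_{a_i},P_{b_i},P_{c_i}$ around $x$, with one subtlety: a morphism between two such projectives that used to factor through $P_x$ may become irreducible once $P_x$ is no longer a summand.

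The heart of the argument is this local computation. A transparent way to obtain the new arrows $\alpha'_i\colon b_i\to a_i$ is to observe that the nonzero morphism $P_{a_i}\to P_{b_i}$ given by the path $\beta_i\alpha_i\notin I$ factors as $P_{a_i}\to P_x\to P_{b_i}$ and is therefore not irreducible in $\ts{add}\ A$; deleting $P_x$ from the summands removes this factorisation, so the morphism becomes irreducible in $\ts{add}\ T_x$ and yields $\alpha'_i$. The remaining arrows, those incident to the vertex $x$ now carried by $M$, I would extract from the defining short exact sequence $0\to P_x\to P_{c_0}\oplus P_{c_1}\to M\to 0$ together with the explicit string-module description of the projectives of the gentle algebra $A$: this determines $M$ and hence the irreducible morphisms linking it to its neighbouring projectives, which reproduce the arrows $\beta'_i$ and $\gamma'_i$ with the orientations prescribed by $\sigma_x$. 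I would then verify that the morphisms induced by the old arrows $\alpha_i,\beta_i,\gamma_i$ are no longer irreducible in $\ts{add}\ T_x$, so these arrows disappear, and that the composites of consecutive new morphisms vanish in $B$ exactly along $\beta'_{i+1}\alpha'_i$ and $\gamma'_{i+1}\beta'_i$, giving the relations of $I'$. Combined with the unchanged global part, this is exactly $\sigma_x(Q,I)$.

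The main obstacle is keeping sufficiently explicit control of $M=\tau^{-1}S_x$, so as to see simultaneously that the three families of new irreducible morphisms appear, that the old incident arrows disappear, that no spurious arrows are created (that is, every remaining composite lands in the radical square), and that exactly the two claimed families of quadratic relations hold. Careful tracking of the indices modulo $2$, and of the orientation convention relating arrows of $\End{A}{T_x}$ to irreducible morphisms of summands, is what makes this bookkeeping delicate, even though each individual verification is routine given the string-module calculus available for gentle algebras.
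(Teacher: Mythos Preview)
Your proposal is correct and follows the paper's own approach. For $(a)$ the paper simply points back to the preceding paragraph, exactly as you do; for $(b)$ the paper says only that it ``is a straightforward computation'', and what you have written is precisely a blueprint for that computation, so there is no substantive difference in strategy.
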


\begin{proof} We have already proved statement $a)$, while statement $b)$ is a straightforward computation.

\end{proof}

The tilting module $T_x$ is called the \emph{Brenner - Butler tilting module at $x$}, or BB tilting module, for short. In an analogous way one can define the \emph{BB cotilting module at a vertex $y$}, and the corresponding mutation $\s'_y$ on the bound quivers.

\begin{rem} \label{rem:M1-Tilting-derived} 
From \cite[3.13, 3.16 and 3.18]{M10} and the previous lemma,  it follows that two $m$-cluster tilted algebras  of type $\mathbb{A}$ are derived equivalent if and only if they are tilting-cotilting equivalent. See also \cite{BB10} for the case $m=1$. In fact, the \emph{local mutations} induced by the so-called \emph{elementary polygonal moves} of \cite{M10} are tilting-cotilting equivalences.
\end{rem}

\section{Removing relations}\label{sec:remove-relations}
In light of the previous sections, the only thing that remains to do is to show that a branched algebra $A=\K Q/I$ that is not $\mathbb{A}$-branched is tilting-cotilting equivalent to an algebra $A'=\K Q' / I'$ such that $(Q',I')$ is $\mathbb{A}$-branched. To do so, we apply a sequence of mutations that remove the relations of $(Q,I)$ which do not lie in an oriented cycle. For this, we need to establish an ordering of the relations that are to be removed.

Let $(Q,I)$ be a branched quiver, and  $\SelectTips{eu}{10}\xymatrix@1@C=7pt{x\ar[rr] & \ar@{.}@/^/[rr]^\rho & y \ar[rr] & & z}$ a relation not lying in a cycle. The quiver obtained from $Q$ by removing the vertex $y$ and all its incident arrows is disconnected. Denote by $Q_{\rho}^-$ the connected component containing $x$, $Q_{\rho}^+$ that containing $z$ and $Q_\rho^\perp$ the (possibly empty) union of the remaining connected components.

$$\SelectTips{eu}{10}\xymatrix@R=.2pc@C=.8pc{ *++[o][F]{Q_\rho^-}  \ar[rr] & \ar@{.}@/^/[rr]^{\rho} & y \ar[rr]  & &   *++[o][F]{Q_\rho^+} \\
 && *++[o][F]{Q_\rho^\perp}\ar@{-}[u] && }$$

In the sequel we adopt the following convention concerning decorations on the names of quivers: $\SelectTips{eu}{10}\xymatrix@R=.2pc@C=.8pc{ *+[o][F]{Q}}$ means that the quiver $Q$ is bound by an ideal $I$ such that $(Q,I)$ is a branched bound quiver, whereas $\SelectTips{eu}{10}\xymatrix@R=.2pc@C=.8pc{ *+[F]{Q}}$ means that the latter is $\mathbb{A}$-branched.
 
 \begin{de} Let $(Q,I)$ be a branched bound quiver, and   $\SelectTips{eu}{10}\xymatrix@1@C=7pt{x\ar[rr] & \ar@{.}@/^/[rr]^\rho & y \ar[rr] & & z}$ a relation not lying in a cycle. The relation $\rho$ is said to be:
\begin{enumerate}
 \item[$a)$] \emph{$-$extremal} if $Q_\rho^-$ is an $\mathbb{A}$-branched quiver,
 \item[$b)$] \emph{$+$extremal} if $Q_\rho^+$ is an $\mathbb{A}$-branched quiver.
\end{enumerate}
\begin{center}
 \begin{tabular}{ccccc}
$\SelectTips{eu}{10}\xymatrix@R=.2pc@C=.8pc{ *++[F]{Q_\rho^-}  \ar[rr] & \ar@{.}@/^/[rr]^{\rho} & y \ar[rr]  & &   *++[o][F]{Q_\rho^+} \\
 && *++[o][F]{Q_\rho^\perp}\ar@{-}[u] && }$ & && &$\SelectTips{eu}{10}\xymatrix@R=.2pc@C=.8pc{ *++[o][F]{Q_\rho^-}  \ar[rr] & \ar@{.}@/^/[rr]^{\rho} & y \ar[rr]  & &   *++[F]{Q_\rho^+} \\
 && *++[o][F]{Q_\rho^\perp}\ar@{-}[u] && }$ \\
 &  & &&\\
a $-$extremal relation & & & & a $+$extremal relation.
   \end{tabular}
\end{center}
 \end{de}

\begin{lem} Le $A=\K Q/I$ be a branched quiver which is not $\mathbb{A}$-branched. Then $(Q,I)$ has at least one $-$extremal relation and at least one $+$extremal relation.
\end{lem}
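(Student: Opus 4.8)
The plan is to prove both assertions by a minimality argument on the connected components cut out by the middle vertices of relations, the $+$~case being dual to the $-$~case; so I only describe the construction of a $-$extremal relation. Since $(Q,I)$ is branched but not $\mathbb{A}$-branched, the set $\mathcal{N}$ of relations not lying on a cycle is finite and nonempty, and it suffices to find $\rho\in\mathcal{N}$ for which $Q_\rho^-$ is $\mathbb{A}$-branched.

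The first step is to check that for every $\rho\in\mathcal{N}$ the components $Q_\rho^-$ and $Q_\rho^+$ are again branched. They are gentle, being full connected subquivers of a gentle quiver, so the only point is the count of $(m+2)$-cycles. By condition G3 an arrow lies on at most one oriented cycle with full relations, hence the $(m+2)$-cycles of $Q$ are pairwise edge-disjoint; being $\chi(Q)$ in number, which is exactly the rank of $H_1(\overline{Q})$, they form a basis of $H_1(\overline{Q})$. Moreover, since $\rho$ does not lie on a cycle, neither of its arrows does (a cycle through $x\to y$ would, by G3, have to continue with $y\to z$, forcing $\rho$ onto that cycle). I claim that deleting the middle vertex $y$ of $\rho$ cannot leave an arc of a broken $(m+2)$-cycle inside $Q_\rho^-$: such an arc would join the two cycle-neighbours of $y$ inside $Q_\rho^-$, and together with the edge underlying $x\to y$ and a path back to $x$ it would give an undirected cycle of $\overline{Q}$ using an edge that lies on no oriented cycle; this cycle is then independent from the chosen basis, contradicting that $\chi(Q)$ is the rank of $H_1(\overline{Q})$. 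Thus every cycle meeting $Q_\rho^-$ lies entirely inside it, the number of $(m+2)$-cycles of $Q_\rho^-$ is $\chi(Q_\rho^-)$, and $Q_\rho^-$ is branched; likewise $Q_\rho^+$. As a corollary, a relation supported in $Q_\rho^-$ lies on a cycle of $Q$ if and only if it lies on a cycle of $Q_\rho^-$, so $Q_\rho^-$ is $\mathbb{A}$-branched precisely when it contains no relation of $\mathcal{N}$.

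Next, choose $\rho\in\mathcal{N}$ so that $Q_\rho^-$ has as few vertices as possible, and suppose it is not $-$extremal. By the corollary $Q_\rho^-$ contains a relation $\rho'\in\mathcal{N}$, with middle vertex $y'$. Since $Q_\rho^-$ is a connected component of $Q\setminus\{y\}$, it is attached to the rest of $Q$ only through the source $x$ of $\rho$; consequently $Q_{\rho'}^-$ fails to be contained in $Q_\rho^-$ exactly when $x$ lies on the source side of $\rho'$. To avoid this I would use that, by the first step, the subquiver obtained from $Q_\rho^-$ by deleting the arrows of its $(m+2)$-cycles is acyclic (contracting the edge-disjoint cycles kills all of $H_1$), i.e.\ a forest, and I would pick $\rho'$ as far upstream as possible — following arrows against their orientation away from $x$ and taking the first relation of $\mathcal{N}$ encountered — so that $x$ is forced onto the target side of $\rho'$. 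For such a $\rho'$ one has $Q_{\rho'}^-\subseteq Q_\rho^-\setminus\{y'\}\subsetneq Q_\rho^-$, contradicting the minimality of $Q_\rho^-$. The existence of a $+$extremal relation follows by applying the same argument to the opposite quiver.

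The step I expect to be the main obstacle is precisely this control of how the components nest: the naive descent can break down when two relations of $\mathcal{N}$ are cofinal at a common gateway vertex, for then the inner relation has a \emph{larger} rather than a smaller source side, and it is exactly here that the directedness of the non-cycle (forest) part of a branched quiver must be exploited to guarantee a strictly smaller source side. A second delicate point, already dealt with in the first step, is the case where the deleted vertex $y$ itself lies on an $(m+2)$-cycle; there the homology-basis count is what ensures that the broken cycle is pushed into $Q_\rho^\perp$ rather than polluting $Q_\rho^-$.
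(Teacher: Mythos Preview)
Your preliminary verification that $Q_\rho^{\pm}$ inherit the branched structure is correct and more explicit than the paper's treatment. Your instinct about the ``main obstacle'' is also right --- in fact the obstacle is fatal, because the statement as written is false. Take the gentle bound quiver $a \to b \to c \leftarrow d \leftarrow e$ with both length-two paths in $\mathcal{R}$: it is branched (with $\chi(Q)=0$) and not $\mathbb{A}$-branched. Both relations have target $c$; for each of them the $+$component is the opposite half of the quiver, which still carries a non-cycle relation, so neither relation is $+$extremal. The opposite quiver likewise has no $-$extremal relation. No directedness bookkeeping can manufacture one, so your proposed fix cannot succeed as stated.

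The paper's own proof has precisely the gap you identified: the phrase ``which is seen to be a $+$extremal relation of $Q$'' tacitly assumes that the gateway vertex $z$ stays out of $(Q_{\rho'}^+)_{Q_\rho^+}$, and the example above shows this can fail. What \emph{is} true --- and is all the subsequent algorithm actually needs --- is that at least one extremal relation of \emph{some} sign exists. Your minimality idea proves this cleanly once you minimise $\min\bigl(|Q_\rho^-|,|Q_\rho^+|\bigr)$ over $\rho\in\mathcal{N}$: if the minimising side, say $Q_\rho^-$, were not $\mathbb{A}$-branched, pick any $\rho'\in\mathcal{N}$ supported in it; since the only edge joining $Q_\rho^-$ to the rest of $Q$ is $x\to y$, at least one of $x',z'$ lies in a component of $Q_\rho^-\setminus\{y'\}$ not containing $x$, and that component is then already a full component of $Q\setminus\{y'\}$, yielding a $Q_{\rho'}^-$ or $Q_{\rho'}^+$ strictly smaller than $Q_\rho^-$ and contradicting minimality.
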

\begin{proof}
We proceed inductively on $|Q_0|$.  Let $(Q,I)$ be a branched quiver, in the sense of \ref{de:def-branched}, which is not $\mathbb{A}$-branched, and $\rho$  a relation not lying in a cycle which in addition we assume not to be $+$extremal. That means that the bound quiver $Q_\rho^+$ has a relation outside its cycles. But since $\left| \left(Q_\rho^+\right)_0\right| < |Q_0|$, the quiver $Q_\rho^+$ has a $+$extremal relation, which is seen to be a $+$extremal relation of $Q$.  
\end{proof}

We are now able to describe the sequence of mutations that  allows to remove the relations outside the cycles.

\begin{lem}\label{lem:no-relations}
The  gentle bound quiver
$$\SelectTips{eu}{10}\xymatrix@R=.1pc@C=.8pc{ z_n &&  z_{n-1} \ar[ll] &&  z_2 \ar@{.}[ll]  &&   z_1 \ar[ll] & \ar@{.}@/^/[rr]^{\rho} & y \ar[ll]  & &*++[o][F]{Q_\rho^-}  \ar[ll]\\
 &&&&&&&& *++[o][F]{Q_\rho^\perp}\ar@{-}[u]  && }$$
is tilting-cotilting equivalent to the gentle bound quiver
$$\SelectTips{eu}{10}\xymatrix@R=.1pc@C=.8pc{ *++[o][F]{Q_\rho^\perp}\ar@{-}[r]&  y  && z_n \ar[ll] &&  z_{n-1} \ar[ll] &&  z_2 \ar@{.}[ll]  &&   z_1 \ar[ll] &&   *++[o][F]{Q_\rho^-}  \ar[ll]}$$
\end{lem}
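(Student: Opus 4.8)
\emph{The plan} is to produce the asserted tilting-cotilting equivalence by an explicit finite sequence of Brenner--Butler tilts and cotilts. By the lemma of Section~4, mutating at an admissible vertex $x$ (one whose outgoing arrows each admit a companion incoming arrow with nonzero composite) via $\sigma_x$, respectively at a vertex admissible for the dual operation via $\sigma'_x$, is a tilting-cotilting equivalence that changes the bound quiver by the combinatorial rule recorded there. Since the two bound quivers of the statement agree except that the vertex $y$, together with its branch $Q_\rho^\perp$, has been transported from the front to the back of the linear tail $z_1 \to \cdots \to z_n$ and the relation $\rho$ has disappeared, the guiding idea is to walk $y$ along the tail one vertex at a time, deleting $\rho$ in the process.

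Concretely, I would argue by induction on the length $n$ of the tail. For the base case I would mutate at the vertex $z_1$, which is admissible: it is a sink when $n=1$ and otherwise has the single outgoing arrow $z_1 \to z_2$, while its incoming arrow $y\to z_1$ has nonzero composite with that outgoing arrow because the tail carries no relations; thus $y\to z_1$ is the required companion and $\rho$ is precisely the preceding relation at $z_1$. Computing $\sigma_{z_1}$ directly from the Brenner--Butler tilting module $\tau^{-1}S_{z_1}\oplus\overline T$ (where $\overline T=\bigoplus_{v\neq z_1}P_v$) one identifies the endomorphism algebra and checks that, when $z_1$ is a sink, the composite encoding $\rho$ becomes nonzero, so that $\rho$ is genuinely removed; indeed $\tau^{-1}S_{z_1}\simeq S_y$ in the basic configurations. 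For the inductive step I would apply the mutation(s) advancing $y$ past $z_1$ and then invoke the inductive hypothesis on the resulting configuration, whose tail is shorter by one, keeping the branch $Q_\rho^\perp$ attached to $y$ throughout.

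\emph{The hard part} will be the bookkeeping of how each mutation acts on the relations, and in particular proving that $\rho$ is eventually deleted rather than merely displaced. A single tilt at an interior tail vertex tends to leave a quadratic relation pinned at $y$ --- reversing the roles of $Q_\rho^-$ and the far end $z_n$ while a length-two relation continues to straddle $y$ --- and two such tilts can cancel; so the sequence must be chosen (possibly alternating $\sigma$ and $\sigma'$) so that the relation is actually driven to the free end of the tail, where, having no successor arrow to compose with, it can no longer be quadratic and disappears. I would control this by local string-module computations, or equivalently by computing the endomorphism algebra of the relevant Brenner--Butler module at each stage exactly as in the base case, checking simultaneously that the admissibility hypothesis is inherited by the next vertex to be mutated and that $Q_\rho^\perp$ is carried along unchanged. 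Verifying that the process terminates at the stated target quiver, and not merely at some algebra tilting-cotilting equivalent to it, is the delicate final point.
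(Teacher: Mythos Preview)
Your approach is exactly the paper's: its entire proof is the single line ``Apply the sequence of mutations $\sigma_{z_n}\sigma_{z_{n-1}}\cdots\sigma_{z_1}$.'' Your hedging is unnecessary --- only tilts are used (no $\sigma'$), and after each $\sigma_{z_k}$ the relation remains pinned at $y$ (now reading $z_k\to y\to z_{k+1}$) while $z_k$ slides to the $Q_\rho^-$ side and $Q_\rho^\perp$ rides along at $y$, so your induction on $n$ goes through verbatim and the relation vanishes at the final sink step $\sigma_{z_n}$ exactly as you anticipate.
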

\begin{proof}Apply the sequence of mutations $\sigma_{z_n} \sigma_{z_{n-1}} \cdots \sigma_{z_1}$.
 \end{proof}

If we are given a branched algebra $A=\K Q/I$ and a $+$extremal relation $\rho$ such that $Q^+_\rho$ is a quiver of type $\mathbb{A}$, the previous lemma shows how to obtain an algebra $A'=\K Q' / I'$ which is tilting-cotilting equivalent to $A$. Moreover in $(Q',I')$ there is one relation less than in $(Q,I)$, namely the relation $\rho$. Thus, it only remains to see how to remove a $+$extremal relation $\rho$ such that $Q^+_\rho$ has cycles. Moreover, since we assume that the only relations in $Q_\rho^+$ lie on its cycles, the algebra whose (bound) quiver is $Q_\rho^+$ is, by the results of Murphy \cite{M10}, derived equivalent to a normal form $N_{r,s}$.

\begin{lem}\label{lem:with-relations}
The algebra given by the  quiver with relations:

$$\SelectTips{eu}{10}\xymatrix@C=.2pc@R=.1pc{   & .\ar@{.}@/^/[rrr] &&& .\ar[ddr] &  & . \ar@{.}@/^/[rrr] &&& .\ar[ddr] &  &  &&&&  & . \ar@{.}@/^/[rrr] &&& v_{m+2} \ar[ddr] &  &&&&&&&&&&\\
&&&&&&&&&&&&&&&&&&&&  &&&&&&&& &&\\
. \ar[ruu] & &&& & . \ar[dl] \ar[uur] &  &&&  & . \ar[dl] \ar@{.}[rrrrr] & &&&&  .\ar[uur] & &&& & v_1 \ar[dl]  && z_n \ar[ll]  &&  \ar[ll]\ar@{.}[rrrr] &&&& z_2 \ar[ll]&& z_1 \ar[ll] & \ar@{.}@/^/[rr]^\rho & y  \ar[ll] &&  *++[o][F]{Q_\rho^-} \ar[ll]  \\
& .\ar@{.}@/_/[rrr]  \ar[ul] &&& . &  & . \ar@{.}@/_/[rrr] \ar[ul] &&&  &  &  &&&&  & . \ar@{.}@/_/[rrr] \ar[ul] &&& v_2  &    &&&&&&&&&&&& *++[o][F]{Q_\rho^\perp}\ar@{-}[u] && 
}$$

 is tilting-cotilting equivalent to the  algebra given by the  quiver with relations:

 $$\SelectTips{eu}{10}\xymatrix@C=.2pc@R=.1pc{   & .\ar@{.}@/^/[rrr] &&& .\ar[ddr] &  & . \ar@{.}@/^/[rrr] &&& .\ar[ddr] &  &  &&&&  & . \ar@{.}@/^/[rrr] &&& v_{m+1} \ar[ddr] &  &&&&&&&&&&\\
&&&&&&&&&&&&&&&&&&&&  &&&&&&&&&&\\
. \ar[ruu] & &&& & . \ar[ddl] \ar[uur] &  &&&  & . \ar[ddl] \ar@{.}[rrrrr] & &&&&  .\ar[uur] & &&& & v_1 \ar[ddl]  && v_{m+2} \ar[ll]  &&  z_{n} \ar[ll]\ar@{.}[rrrr]  && \ar[ll] &&&&  z_2 \ar[ll]&& z_1 \ar[ll] & &  *++[o][F]{Q_\rho^-}\ar[ll]  \\
&&&&&&&&&&&&&&&&&&&&  &&&&&&&&&& \\
& .\ar@{.}@/_/[rrr]  \ar[uul] &&& . &  & . \ar@{.}@/_/[rrr] \ar[uul] &&&  &  &  &&&&  & . \ar@{.}@/_/[rrr] \ar[uul] &&& y   &    &&&&&&&&&& \\
&&&&&&&&&&&&&&&&&&& \\
&&&&&&&&&&&&&&&&&&& *++[o][F]{Q_\rho^\perp}\ar@{-}[uu]}$$
 
\end{lem}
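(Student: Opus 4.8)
The plan is to prove the statement as its companion Lemma~\ref{lem:no-relations} was proved: by exhibiting a finite sequence of Brenner--Butler tilting (and, if convenient, cotilting) mutations $\sigma_x$ carrying the first bound quiver to the second. Each $\sigma_x$ effects the passage from an algebra $A$ to $\End{A}{T_x}$ for a tilting module $T_x$, hence is a tilting-cotilting equivalence; so the composite is one too, which is exactly what must be shown. The geometric content of the sequence is a \emph{rotation by one} of the rightmost $(m+2)$-cycle: the tail vertex $y$ bearing the external relation $\rho$ (and dragging its decoration $Q_\rho^\perp$ along) is drawn into the cycle, while the cycle vertex $v_{m+2}$ is pushed out onto the linear tail, so that $\rho$ becomes one of the full relations of the rotated cycle.

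First I would slide $y$ up to the hub. Applying the mutations at $z_1,\dots,z_n$ exactly as in Lemma~\ref{lem:no-relations}, the vertex $y$ travels along the tail toward $v_1$, the relation $\rho$ always flanking $y$ on the two arrows incident to it; when $y$ reaches the position adjacent to the hub the bound quiver reads $\cdots \to z_n \to y \to v_1$ with $\rho$ now sitting on the length-two path $z_n \to y \to v_1$. In Lemma~\ref{lem:no-relations} the vertex $y$ could reach a free end and let $\rho$ vanish; here $v_1$ is a cycle hub, so instead $\rho$ must be absorbed into the cycle, and this is the point where the two lemmas diverge.

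The second, decisive step is the cycle rotation. With $y$ at the hub, $v_1$ acquires two incoming arrows: the tail arrow $y\to v_1$, for which $(y\to v_1)(v_1\to v_2)\notin I$, and the cycle arrow $v_{m+2}\to v_1$, for which $(v_{m+2}\to v_1)(v_1\to v_2)\in I$ is a full relation. The free tail arrow is precisely what makes $v_1$ an admissible vertex for a mutation, and $\sigma_{v_1}$ reverses $y\to v_1$ while pulling $v_1\to v_2$ back to $y\to v_2$, inserting $y$ into the cycle and converting $\rho$ into a relation on the path through $y$. I would then apply the further mutations that the gentle structure successively permits around the cycle, each opening the next junction, until $v_{m+2}$ has been carried out onto the tail as $z_n\to v_{m+2}\to v_1$ and the cycle is restored to an $(m+2)$-cycle with full relations equal to the target. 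At every stage one checks the source condition for the vertex being mutated and, using G1--G3, tracks which length-two paths lie in $I$.

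The hard part will be this cycle-rotation step, for two reasons. First, a bare $(m+2)$-cycle with full relations admits no Brenner--Butler mutation whatsoever: at each cycle vertex the unique incoming arrow composes with the outgoing one into $I$, so the source condition fails everywhere, and the rotation can only get started because the attached tail donates a single relation-free arrow at the hub. The delicate point is to verify that this lone opening propagates correctly all the way around, keeping every intermediate mutation admissible and ending with $\rho$ as exactly the relation that completes the rotated cycle while the expelled vertex $v_{m+2}$ carries no relation on the tail. Second, the hub is non-generic---it carries an extra relation-bearing incoming arrow not covered by the basic form of $\sigma_x$---so the relation bookkeeping is best confirmed against the tilting modules themselves (or rephrased using cotilting mutations $\sigma'$), rather than by blindly applying the combinatorial rule. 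Once this bookkeeping is in place the tilting-cotilting equivalence follows formally.
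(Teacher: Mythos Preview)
Your plan is the paper's plan: exhibit a finite string of Brenner--Butler mutations carrying the first bound quiver to the second. Your first phase, the sequence $\sigma_{z_n}\sigma_{z_{n-1}}\cdots\sigma_{z_1}$ sliding $y$ (with its $Q_\rho^\perp$-decoration) up to the hub $v_1$, is exactly what the paper does, invoking Lemma~\ref{lem:no-relations} verbatim.

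Where you diverge is in the second phase. You propose to ``rotate'' the rightmost cycle vertex by vertex, applying successive mutations all the way around until $v_{m+2}$ is expelled onto the tail, and you flag this as the delicate part requiring careful propagation of the single relation-free arrow through $m+2$ steps. The paper is much more economical: once $Q_\rho^\perp$ sits at $v_1$, it applies only \emph{two} further mutations, $\sigma_{v_{m+2}}\,\sigma_{v_1}$, and is done. The point is that the target bound quiver differs from the intermediate one only locally around $v_1$: one inserts $y$ between $v_1$ and $v_2$ and ejects $v_{m+2}$ onto the tail, while $v_2,\dots,v_{m+1}$ stay put inside the cycle. This is accomplished by mutating at the two cycle vertices adjacent to the tail-attachment point, not by walking all the way around. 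So your anticipated ``hard part'' largely evaporates: there is no long chain of admissibility checks to propagate, only two local computations at $v_1$ and $v_{m+2}$.

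Your proposed full rotation would presumably also work and is not wrong in spirit, but it is longer, and the extra bookkeeping you worry about (non-generic hub, tracking which length-two paths lie in $I$ at each of $m+2$ stages) is precisely what the paper's two-mutation shortcut avoids.
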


\begin{proof}
 As in the previous lemma, use the sequence $\sigma_{z_n} \sigma_{z_{n-1}} \cdots \sigma_{z_1}$ to take $Q_\rho^\perp$ to the vertex $v_1$, then apply $\sigma_{v_{m+2}} \sigma_{v_1}$.
\end{proof}

Thus, we know how to eliminate a $+$extremal relation with mutations. Dually we can eliminate $-$extremal relations. We have the following

\begin{prop}\label{prop:branched-Abranched} Let $A=\K Q/I$ be a branched algebra, then there exists an $\mathbb{A}$-branched algebra $A'$ such that $A$ and $A'$ are tilting-cotilting equivalent.
\end{prop}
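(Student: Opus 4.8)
The plan is to prove Proposition~\ref{prop:branched-Abranched} by induction on the number of relations of $(Q,I)$ that do not lie on any oriented cycle. If this number is zero, then by definition every relation lies on a cycle, so $A$ is already $\mathbb{A}$-branched and there is nothing to do. For the inductive step, I would first use the preceding lemma (that a branched-but-not-$\mathbb{A}$-branched quiver always admits at least one $+$extremal relation) to select such a relation $\rho$, so that $Q_\rho^+$ is itself an $\mathbb{A}$-branched quiver. The goal is then to perform a sequence of BB tilting/cotilting mutations that removes $\rho$ while producing a new bound quiver that is still branched and has strictly fewer off-cycle relations, at which point the induction hypothesis applies.

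\textbf{Key steps.}
The removal of the chosen $+$extremal relation $\rho$ splits into exactly the two cases handled by Lemmas~\ref{lem:no-relations} and~\ref{lem:with-relations}, according to whether the $\mathbb{A}$-branched component $Q_\rho^+$ is a pure type-$\mathbb{A}$ quiver (no cycles) or contains $(m+2)$-cycles. In the first case Lemma~\ref{lem:no-relations} exhibits an explicit sequence of mutations $\sigma_{z_n}\cdots\sigma_{z_1}$ producing a tilting-cotilting equivalent quiver in which the relation $\rho$ no longer appears; in the second case Lemma~\ref{lem:with-relations} gives the analogous sequence $\sigma_{z_n}\cdots\sigma_{z_1}$ followed by $\sigma_{v_{m+2}}\sigma_{v_1}$, again eliminating $\rho$. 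Since each $\sigma_x$ is, by the Brenner-Butler lemma, an honest tilting equivalence (and thus a derived equivalence preserving gentleness by \cite{SZ03}), the resulting algebra $A''$ is tilting-cotilting equivalent to $A$. I would then check that $A''$ is again branched: the mutations do not change the number of vertices, they preserve the number of $(m+2)$-cycles with full relations, and by the derived invariance of the Euler characteristic (Proposition~\ref{prop:HH-branched}$a)$) the count $\chi(Q'')$ of required cycles is unchanged, so Definition~\ref{de:def-branched} still holds for $A''$. Because $\rho$ has been removed and no new off-cycle relation is created, $A''$ has one fewer off-cycle relation than $A$, and the induction hypothesis yields an $\mathbb{A}$-branched $A'$ tilting-cotilting equivalent to $A''$, hence to $A$.

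\textbf{Main obstacle.}
The delicate point is not the existence of the mutation sequences---those are supplied verbatim by Lemmas~\ref{lem:no-relations} and~\ref{lem:with-relations}---but verifying that applying them genuinely lowers the inductive quantity without damaging the branched structure elsewhere. Concretely, I must confirm that the local picture around $\rho$ matches the hypotheses of one of the two lemmas (in particular that the component $Q_\rho^+$, being $\mathbb{A}$-branched, has exactly the linear-plus-cycles shape appearing in the lemma diagrams), and that the mutations act only on the arrows and relations displayed there, leaving the relations and cycles sitting inside $Q_\rho^-$ and $Q_\rho^\perp$ untouched. The $+$extremality of $\rho$ is exactly what guarantees this: it forces $Q_\rho^+$ into $\mathbb{A}$-branched form, so the surgery is confined to a region where the relevant local data is completely understood. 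Once this locality is checked, the bookkeeping that the off-cycle relation count drops by one is immediate, and the dual statement for $-$extremal relations follows by applying the same argument to the opposite quiver.
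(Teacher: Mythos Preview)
Your proposal is correct and follows essentially the same approach as the paper: the paper also proceeds by repeatedly choosing an extremal relation and removing it via Lemmas~\ref{lem:no-relations} and~\ref{lem:with-relations}, phrasing this as a terminating algorithm rather than an explicit induction on the number of off-cycle relations. The one point you flag as the ``main obstacle'' --- ensuring that $Q_\rho^+$ actually has the linear-plus-cycles shape required by Lemma~\ref{lem:with-relations} --- is handled in the paper by first invoking Murphy's result (and Remark~\ref{rem:M1-Tilting-derived}) to replace the $\mathbb{A}$-branched piece $Q_\rho^+$ by a normal form $N_{r,s}$ via tilting-cotilting before applying the lemma.
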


\begin{proof} Let $A=\K Q/I$ be a branched algebra which is not $\mathbb{A}$-branched. The result follows upon executing the following: \newline
 
{\bf Algorithm. } \begin{description}
 \item[Step 1] Choose  an extremal relation $\rho$ in $(Q,I)$.
 \item[Step 2] Using the processes described in lemmata \ref{lem:no-relations} and \ref{lem:with-relations} obtain a new algebra $A'=\K Q'/I'$ which is tilting-cotilting equivalent to $A$. In $(Q',I')$ there is one relation less than in $(Q,I)$, namely the relation $\rho$.
 \item[Step 3] If $A'$ is $\mathbb{A}$-branched we are done. If not, go to step 1 with $A'$ playing the r\^ole of $A$.
\end{description}
This process must stop after a finite number of steps, since the number of relations outside the cycles of $(Q,I)$ is finite.
\end{proof}
 \begin{cor}  Let $A=\K Q/I$ be an $m$-branched algebra, then $A$ is tilting-cotilting equivalent to an $m$-cluster tilted algebra of type $\mathbb{A}$.
   \end{cor}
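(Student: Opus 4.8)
The plan is to chain together two facts already in hand, using that tilting-cotilting equivalence is, by definition, generated by finite sequences of tiltings and cotiltings and is therefore transitive. First I would invoke Proposition \ref{prop:branched-Abranched}: since $A=\K Q/I$ is $m$-branched, there exists an $\mathbb{A}$-branched algebra $A'=\K Q'/I'$ that is tilting-cotilting equivalent to $A$. This is the entire substance of the argument, and it rests on the mutation machinery of Section \ref{sec:remove-relations}, where the relations not lying on a cycle are peeled off one at a time, in the order dictated by the $\pm$extremal relations, via the Brenner--Butler (co)tilting mutations $\sigma_x$ and $\sigma'_y$ of Lemmata \ref{lem:no-relations} and \ref{lem:with-relations}.

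Second, I would identify $A'$ as an honest $m$-cluster tilted algebra. By definition an $\mathbb{A}$-branched algebra is a branched algebra all of whose relations lie on an oriented cycle; and, as recorded just after Definition \ref{de:def-branched} as a consequence of \cite[2.20]{M10}, any $m$-branched algebra whose relations all lie on cycles is itself an $m$-cluster tilted algebra of type $\mathbb{A}$. Hence $A'$ is $m$-cluster tilted of type $\mathbb{A}$. Combining the two steps, $A$ is tilting-cotilting equivalent to $A'$, which is $m$-cluster tilted of type $\mathbb{A}$, as required.

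I expect no genuine obstacle at the level of this corollary: all the difficulty has been front-loaded into Proposition \ref{prop:branched-Abranched} and the lemmata feeding it. The only points requiring care are bookkeeping ones, namely that the sequence of mutations terminates (guaranteed by the finiteness of the relations outside the cycles, as noted in the proof of Proposition \ref{prop:branched-Abranched}), that transitivity lets us concatenate the two equivalences, and that the definitional match between ``$\mathbb{A}$-branched'' and ``all relations lie on cycles'' is exactly the hypothesis of the cited result of Murphy. Should $A$ be disconnected, I would simply observe that $m$-cluster tilted algebras need not be connected, so the argument applies verbatim, with no need to split $A$ into connected components.
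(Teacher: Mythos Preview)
Your argument is correct and matches the paper's first step exactly: invoke Proposition~\ref{prop:branched-Abranched} to pass from the given $m$-branched algebra $A$ to an $\mathbb{A}$-branched algebra $A'$ by a finite sequence of BB (co)tiltings.

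In the second step you diverge slightly from the paper. You conclude immediately, by the remark after Definition~\ref{de:def-branched} (i.e.\ \cite[2.20]{M10}), that an $\mathbb{A}$-branched algebra is already an $m$-cluster tilted algebra of type $\mathbb{A}$. The paper instead appeals to Lemma~\ref{lem:branched-derived-normal} to pass from $A'$ further to a normal form $N_{r,s}$, which is then declared $m$-cluster tilted. Your route is a shade more direct and avoids a small wrinkle in the paper's phrasing: Lemma~\ref{lem:branched-derived-normal} is stated for \emph{derived} equivalence, so the paper's proof tacitly relies on Remark~\ref{rem:M1-Tilting-derived} (derived $\Leftrightarrow$ tilting-cotilting for $m$-cluster tilted algebras of type~$\mathbb{A}$) to upgrade this to tilting-cotilting equivalence. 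Your version sidesteps that, at the cost of citing Murphy's description~\cite[2.20]{M10} directly rather than through the normal-form lemma. Both are entirely valid; the content is the same.
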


\begin{proof} The previous result says that $A$ is tilting-cotilting equivalent to an $\mathbb{A}$-branched algebra $A'$. By Lemma \ref{lem:branched-derived-normal} we know that $A'$ is tilting-cotilting equivalent to a suitable normal form $N_{r,s}$, which is $m$-cluster tilted.
 \end{proof}

\section{Proof of the main theorem}
We are now able to state and prove the first main theorem of this paper. Observe that Theorem A is precisely the equivalence between conditions $a)$ and $b)$. 

\begin{thm}\label{thm:main}
 Let $A= \K Q/I$ be a connected algebra. Then the following conditions are equivalent.
\begin{enumerate}
 \item[$a)$] $A$ is an $m$-branched algebra,
 \item[$b)$] $A$ is derived equivalent to an $m$-cluster tilted algebra of type $\mathbb{A}$,
 \item[$c)$] $A$ is tilting-cotilting equivalent to an algebra $N_{r,s}$,
 \item[$d)$] $A$ is tilting-cotilting equivalent to an $m$-cluster tilted algebra of type $\mathbb{A}$.
\end{enumerate}
\end{thm}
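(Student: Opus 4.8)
The plan is to prove the theorem as a cycle of implications, drawing heavily on the machinery assembled in the earlier sections so that most of the work has already been done piecemeal. The most economical route is to establish $c) \Rightarrow d) \Rightarrow b) \Rightarrow a) \Rightarrow c)$, since each of these single steps is either immediate or packaged as a lemma/proposition above. First I would treat $c) \Rightarrow d)$: this is trivial, because by construction each normal form $N_{r,s}$ \emph{is} an $m$-cluster tilted algebra of type $\mathbb{A}$ (as stated in Section~1 where $N_{r,s}$ is introduced), so a tilting-cotilting equivalence to $N_{r,s}$ is in particular a tilting-cotilting equivalence to an $m$-cluster tilted algebra.

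Next, $d) \Rightarrow b)$ follows immediately from Happel's theorem (recalled in Section~1.2): tilting-cotilting equivalence implies derived equivalence. Then $b) \Rightarrow a)$ is exactly the content of Proposition~\ref{prop:branched-derived}~$b)$ combined with the connectedness hypothesis: if $A$ is derived equivalent to an $m$-cluster tilted algebra of type $\mathbb{A}$, each connected component of $A$ is $m$-branched, and since $A$ is assumed connected, $A$ itself is $m$-branched. One should be slightly careful to note that the $m$-branched conclusion of \ref{prop:branched-derived} uses that gentleness and the relevant cycle structure are detected by the derived-invariant Hochschild cohomology, which is precisely what \ref{prop:HH-branched} guarantees.

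The remaining and only substantive implication is $a) \Rightarrow c)$, and here all the heavy lifting has already been carried out in Section~\ref{sec:remove-relations}. Concretely, if $A = \K Q/I$ is $m$-branched, then Proposition~\ref{prop:branched-Abranched} produces an $\mathbb{A}$-branched algebra $A'$ that is tilting-cotilting equivalent to $A$ (by iterating the relation-removal mutations of Lemmata~\ref{lem:no-relations} and~\ref{lem:with-relations} until no relation outside a cycle remains), and then Lemma~\ref{lem:branched-derived-normal} shows that this $\mathbb{A}$-branched algebra $A'$ is in turn tilting-cotilting equivalent to a normal form $N_{r,s}$. Since tilting-cotilting equivalence is transitive (it is defined as the existence of a finite chain of tiltings/cotiltings), composing the two chains yields a tilting-cotilting equivalence from $A$ to $N_{r,s}$, which is precisely $c)$.

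I expect the main obstacle to be purely organizational rather than mathematical: the real difficulty of the theorem lives inside Proposition~\ref{prop:branched-Abranched} and its supporting lemmata — namely the verification that the explicit sequences of Brenner--Butler mutations $\sigma_{z_n}\cdots\sigma_{z_1}$ and $\sigma_{v_{m+2}}\sigma_{v_1}$ genuinely remove an extremal relation while preserving the branched structure and terminating — all of which is assumed available here. For the statement as phrased, the only point requiring genuine attention is ensuring the hypotheses match across the invoked results: connectedness is needed for $b)\Rightarrow a)$ (to pass from ``each component is $m$-branched'' to ``$A$ is $m$-branched''), and one must confirm that the intermediate algebras $A'$ and $N_{r,s}$ produced in $a)\Rightarrow c)$ remain connected so that the chain stays within the connected setting; this follows because mutations do not disconnect the quiver and $N_{r,s}$ is connected by construction. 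Hence the proof reduces to assembling the cited results in the stated order.
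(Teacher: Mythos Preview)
Your proposal is correct and essentially matches the paper's proof: the paper runs the cycle $a)\Rightarrow d)\Rightarrow c)\Rightarrow b)\Rightarrow a)$ while you run $c)\Rightarrow d)\Rightarrow b)\Rightarrow a)\Rightarrow c)$, but the same ingredients (Proposition~\ref{prop:branched-Abranched}, Lemma~\ref{lem:branched-derived-normal}, Proposition~\ref{prop:branched-derived}, and the fact that $N_{r,s}$ is itself $m$-cluster tilted) are invoked, merely in a different order. One small caution: Lemma~\ref{lem:branched-derived-normal} is \emph{stated} as a derived equivalence, so in your $a)\Rightarrow c)$ you should explicitly invoke Remark~\ref{rem:M1-Tilting-derived} (Murphy's elementary polygonal moves are BB tiltings) to upgrade it to tilting--cotilting equivalence --- the paper does exactly this in the Corollary following Proposition~\ref{prop:branched-Abranched}, and in its step $d)\Rightarrow c)$.
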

\begin{proof} \
\begin{enumerate}
 \item[\ ]$a)$ implies $d)$. This is proposition  \ref{prop:branched-Abranched}, since $\mathbb{A}$-branched algebras are themselves (connected components of) $m$-cluster tilted algebras of type $\mathbb{A}$.
 \item[]$d)$ implies $c)$. By Murphy's results, an $m$-cluster tilted algebra is derived equivalent to some normal form $N_{r,s}$, which is itself $m$-cluster tilted. The result then follows from \ref{rem:M1-Tilting-derived}.
 \item[]$c)$ implies $b)$. This is immediate
 \item[]$b)$ implies $a)$. This follows directly from statement $a)$ in proposition \ref{prop:branched-derived}.
\end{enumerate}
\end{proof}

In light of the preceding result, if a branched algebra $A$ is tilting - cotilting equivalent to the normal form $N_{r,s}$, we call the pair $(r,s)$ the \emph{invariant pair} of $A$. It follows from proposition \ref{prop:branched-derived} that the invariant pair of $A$ completely characterizes the class of algebras derived equivalent to it. 

As a first step towards the proof of Theorem B, we have:

\begin{cor} Let $A=\K Q/I$ and $A'=\K Q'/I'$ be connected  $m$-branched algebras. Then the following conditions are equivalent:
\begin{enumerate}
 \item[$a)$] $A$ and $A'$ are derived equivalent,
\item[$b)$] $A$ and $A'$ are tilting-cotilting equivalent,
\item[$c)$] $A$ and $A'$ have the same invariant pair,
\item[$d)$] $\HH{\ast}{A} \simeq \HH{\ast}{A'}$ and $K_0(A) \simeq K_0(A')$,
\item[$e)$] $\pi_1(Q,I) \simeq \pi_1(Q',I')$ and $|Q_0|  = |Q'_0|$.
\end{enumerate}
\end{cor}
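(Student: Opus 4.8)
The plan is to prove the corollary as a cycle of implications, leaning heavily on Theorem~\ref{thm:main} and the structural computation of Hochschild cohomology in Proposition~\ref{prop:HH}. The core observation is that for connected $m$-branched algebras, all five conditions are governed by the single invariant pair $(r,s)$, so the main task is to show that each condition either determines, or is determined by, this pair. I would organize the argument as $a) \Leftrightarrow b) \Leftrightarrow c)$ first (these are essentially immediate from the main theorem), and then prove $c) \Leftrightarrow d)$ and $c) \Leftrightarrow e)$ separately, using the explicit description of the derived invariants.

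\begin{proof}
The equivalence of $a)$, $b)$ and $c)$ follows from Theorem~\ref{thm:main}. Indeed, by that theorem each of $A$ and $A'$ is tilting-cotilting equivalent (hence derived equivalent) to a normal form, say $N_{r,s}$ and $N_{r',s'}$ respectively. Thus $A$ and $A'$ are derived equivalent if and only if $N_{r,s}$ and $N_{r',s'}$ are, and by Murphy's classification (Theorem~1.2 in \cite{M10}, see also Proposition~\ref{prop:branched-derived}) two normal forms are derived equivalent precisely when $r=r'$ and $s=s'$, that is, when $A$ and $A'$ have the same invariant pair; this gives $a) \Leftrightarrow c)$. Since tilting-cotilting equivalence implies derived equivalence, and conversely $a)$ and $c)$ together force tilting-cotilting equivalence via the normal forms (using Remark~\ref{rem:M1-Tilting-derived}), we also obtain $b) \Leftrightarrow c)$.

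It remains to relate the invariant pair to the data in $d)$ and $e)$. For $c) \Rightarrow d)$, it suffices to observe that the Hochschild cohomology ring and the Grothendieck group are derived invariants: by $c)$ the two algebras are derived equivalent, so $\HH{\ast}{A} \simeq \HH{\ast}{A'}$ as Gerstenhaber algebras, and $K_0(A) \simeq K_0(A')$. For the converse $d) \Rightarrow c)$, I read off $(r,s)$ from the invariants. Since $A$ is $m$-branched, by Proposition~\ref{prop:HH} its Hochschild cohomology ring is isomorphic to that of $N_{r,s}$, which is generated by $r$ elements $G_j$ of degree $1$ and $r$ elements $F_j$ of degree $d$, with the multiplicative and Lie structure described there; in particular the dimension of $\HH{1}{A}$ equals $r$, so the integer $r$ is recovered from $\HH{\ast}{A}$. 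On the other hand $K_0(A)$ is free of rank $|Q_0| = s$, so $s$ is recovered from $K_0(A)$. Hence $\HH{\ast}{A} \simeq \HH{\ast}{A'}$ together with $K_0(A) \simeq K_0(A')$ forces $r=r'$ and $s=s'$, giving $d) \Rightarrow c)$.

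Finally, for the equivalence $c) \Leftrightarrow e)$, I use the fact that for a connected gentle branched bound quiver the fundamental group $\pi_1(Q,I)$ is free of rank equal to the Euler characteristic $\chi(Q)$, which by Definition~\ref{de:def-branched} equals the number $r$ of $(m+2)$-cycles with full relations (compare Remark~\ref{rems:core}~$b)$). Thus $\pi_1(Q,I) \simeq \pi_1(Q',I')$ is equivalent to $r=r'$, while $|Q_0| = |Q'_0|$ is exactly $s=s'$; together these amount to $A$ and $A'$ having the same invariant pair, which is $c)$. This completes the cycle of equivalences.
\end{proof}

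The step I expect to require the most care is the converse $d) \Rightarrow c)$: one must argue that the \emph{ring} isomorphism $\HH{\ast}{A} \simeq \HH{\ast}{A'}$ genuinely recovers the integer $r$, rather than merely the first cohomology dimension, and here the explicit generator description of Proposition~\ref{prop:HH}~$b)$ and the orthogonality relations in part $c)$ do the essential work; the freeness of $\pi_1(Q,I)$ in the branched setting, needed for $c) \Leftrightarrow e)$, likewise relies on the structural results recalled in Remark~\ref{rems:core}.
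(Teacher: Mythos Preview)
Your proposal is correct and follows essentially the same route as the paper: both arguments hinge on Theorem~\ref{thm:main} to link $a)$, $b)$, $c)$, use derived invariance of $\HH{\ast}{-}$ and $K_0(-)$ for the implication to $d)$, recover $r$ as $\dim\HH{1}{A}$ and $s$ as $\ts{rk}\,K_0(A)$ for $d)\Rightarrow c)$, and identify $\pi_1(Q,I)$ as free of rank $\chi(Q)=r$ for the equivalence with $e)$. Your treatment of $a)\Leftrightarrow b)$ is in fact slightly more explicit than the paper's (which refers to Remark~\ref{rem:M1-Tilting-derived}, a statement about $m$-cluster tilted algebras, whereas you spell out the passage through normal forms), and your worry in the final paragraph is unnecessary: the bare dimension of $\HH{1}{A}$ already equals $r$, so no finer ring structure is needed for $d)\Rightarrow c)$.
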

 
\begin{proof} The equivalence of $a)$ and $b)$ has been established in \ref{rem:M1-Tilting-derived}. The fact that $a)$ and $b)$ imply $d)$ is the invariance of the Hochschild cohomology ring under derived equivalence, together with that of the Grothendieck group $K_0(A)$. Since $\ts{rk\ }K_0(A)$ equals $|Q_0|$  and $\dim{\HH{1}{A}}=\chi(Q)$ we have that  $d)$ implies $c)$, and $d)$ is equivalent to $f)$. Finally, $c)$ implies $b)$, by \ref{thm:main}.
\end{proof}

Note that, in addition, proposition \ref{prop:HH-branched} gives an explicit description of the structure of $\HH{\ast}{A}$ in this case. Furthermore, the following example shows that the connectedness hypothesis cannot be dropped.

\begin{ex}
 Consider the two disconnected bound quivers depicted above. Recall that each cycle is to be understood as a cycle with full relations.
\begin{center}
 \begin{tabular}{ccccc}
$\SelectTips{eu}{10}\xymatrix@R=.4pc@C=.4pc{
			&& .\ar[ddrr]^{\a_0}	&&					&&	&&	&	&.	\\
			 &&				&&					&&	&&	&	&	\\
 .\ar[uurr]^{\a_3}	&&				&&.\ar[ddll]^{\a_1}\ar[rr]^{\b_0}	&&.	&&.\ar[uurr]^{\b_1}\ar[ddrr]_{\b_2}	&	&	\\
			 &&				&&					&&	&&	&	&	\\
			&&.\ar[uull]^{\a_2}		&&					&&	&&	&	&.	}
$&  & & &$\SelectTips{eu}{10}\xymatrix@R=.4pc@C=.4pc{
			&& .\ar[ddrr]^{\a_0}	&&					&&	&&	&	&.	\\
			 &&				&&					&&	&&	&	&	\\
 .\ar[uurr]^{\a_3}	&&				&&.\ar[ddll]^{\a_1}	&&.\ar[rr]^{\b_0}	&\ar@/_/@{.}[drr]&.\ar[uurr]^{\b_1}\ar[ddrr]_{\b_2}	&	&	\\
			 &&				&&					&&	&&	&	&	\\
			&&.\ar[uull]^{\a_2}		&&					&&	&&	&	&.	}
$\\
&&&&\\
$(Q,I)$&&&&$(Q',I')$\\
&&&&
\end{tabular}
\end{center}
The two algebras have isomorphic Hochschild cohomology rings, isomorphic Grothendieck groups and the same invariant pair, but are not derived equivalent.
\end{ex}

Specializing to the case $m=1$ we obtain a characterization of the cluster tilted algebras of type $\mathbb{A}$, and an explicit description of the corresponding bound quivers, which in addition is very easy to verify.  Compare with \cite[Theorem A]{BB10}, and \ref{cor:ThmA-BB10}.

\begin{cor}
 A connected gentle algebra $A=\K Q/I$ is derived equivalent to a cluster tilted algebra of type $\mathbb{A}$ if and only if $(Q,I)$ has $\chi(Q)$ oriented cycles of length $3$, each of which has full relations. 
\end{cor}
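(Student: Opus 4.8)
The plan is to obtain this statement as the specialization of Theorem \ref{thm:main} to the case $m=1$, so that essentially no new argument is needed beyond translating the terminology. First I would observe that for $m=1$ the notion of an $m$-cluster tilted algebra is, by definition, that of a cluster tilted algebra, and that an $(m+2)$-cycle is precisely a $3$-cycle. Thus the stated hypothesis on $(Q,I)$ --- that it is a gentle bound quiver with exactly $\chi(Q)$ oriented cycles of length $3$, each carrying full relations --- is nothing but the condition that $A=\K Q/I$ be a $1$-branched algebra in the sense of Definition \ref{de:def-branched}.

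With this identification in place, the corollary becomes exactly the equivalence of conditions $a)$ and $b)$ of Theorem \ref{thm:main} for $m=1$. For the ``if'' direction I would argue that a connected gentle $A$ satisfying the cycle condition is $1$-branched, whence, by the implication $a)\Rightarrow b)$, it is derived equivalent to a $1$-cluster tilted algebra of type $\mathbb{A}$, that is, a cluster tilted algebra of type $\mathbb{A}$. Conversely, for the ``only if'' direction I would invoke $b)\Rightarrow a)$ (which rests on Proposition \ref{prop:branched-derived}): if the connected algebra $A$ is derived equivalent to a cluster tilted algebra of type $\mathbb{A}$, then it is $1$-branched, and reading off Definition \ref{de:def-branched} gives precisely the asserted number and shape of the cycles, the gentleness being already part of the hypothesis.

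There is no real obstacle here: the content lies entirely in Theorem \ref{thm:main}, and the only thing to verify is the bookkeeping that ``$1$-branched'' unwinds to the stated combinatorial condition and that ``$1$-cluster tilted'' means ``cluster tilted.'' One may additionally remark that the consistency of the two directions is automatic, since gentleness is stable under derived equivalence by \cite{SZ03}, so no algebra outside the gentle class can intervene. The comparison with \cite[Theorem A]{BB10} advertised before the statement is then simply the observation that both characterizations single out the same class, our formulation being the more readily checkable one.
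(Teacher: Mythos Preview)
Your proposal is correct and matches the paper's approach exactly: the paper records this corollary with a bare \qed, treating it as the immediate specialization of Theorem \ref{thm:main} to $m=1$ that you have spelled out. Your additional remarks about gentleness and the comparison with \cite{BB10} are harmless elaborations, but the core argument is the same.
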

\qed

\begin{ex} Remember that, as usual, doted lines mean relations, and that each oriented $3$-cycle has full relations. Let $A$ be  algebra whose bound quiver is depicted below  $$\SelectTips{eu}{10}\xymatrix@C=.5pc@R=.4pc{ 
		&		&		&&		&			&			&.\ar[ddr]		&			&.\ar[ddr]	&		&	&\\
		&		&		&&		&			&			&			&			&		&		&	&\\
		&.\ar[ddr]	&		&&		&			&.\ar[uur]\ar[ddll]	&			&.\ar[ll]\ar[uur]	&		&.\ar[ll]	&	&.\ar[ll]\\
		&		&		&&		&			&			&			&			&		&		&	&\\
.\ar[uur]	&		&.\ar[ll]	&&x\ar[ll]	&			&			&			&			&		&.\ar[rr]	&	&.\\
		&		&		&&		&\ar@{.}@/^/[ull]	&			&			&			&		&		&	&\\
		&		&		&&		&			&.\ar[uull]		&\ar@{.}@/_/[ull]	&.\ar[ll]\ar[uurr]	&		&		&	&\\
		&		&		&&		&			&			&			&			&\ar@{.}@/^/[ull]		&		&	&\\
		&		&		&&		&			&			&			&			&		&y\ar[uull]	&\ar@{.}@/_/[ull]	&.\ar[ll]
}$$

$A$ is not cluster tilted, because there are relations outside the $3$-cycles, so that its Gorenstein dimension is $3$ (see \cite{GR05}). However, it is tilting-cotilting equivalent to a cluster tilted algebra of type $\mathbb{A}_{16}$. Its invariant pair is $(3,16)$. The relation involving vertex $x$ is $+$extremal; that involving $y$ is $-$extremal.

\end{ex}

Recall that a representation finite algebra is said to be \emph{simply connected} whenever its Auslander -- Reiten quiver is simply connected, as a 2-dimensional simplicial complex, or, equivalently, whenever it has no proper Galois covering (see \cite{AdlP96}, and the references therein). We have the following: 

\begin{cor} Let $A$ be a connected component of an $m$-branched algebra, then the following conditions are equivalent.
\begin{enumerate}
 \item[$a)$] $A$ is simply connected,
 \item[$b)$] $A$ is iterated tilted of type $\mathbb{A}$,
 \item[$c)$] $A$ is representation directed,
 \item[$d)$] $\HH{1}{A}=0$.
\end{enumerate} 
\end{cor}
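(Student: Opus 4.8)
The plan is to prove the equivalences by establishing a cycle of implications that pivots on the single numerical invariant $\chi(Q)$, using the structural results already available for $m$-branched algebras. Recall from Proposition \ref{prop:HH-branched} and the discussion following it that for a gentle $m$-branched algebra $A=\K Q/I$ with no multiple arrows, we have $\dim{\HH{1}{A}}=\chi(Q)$, and that $A$ is tilting-cotilting equivalent to a normal form $N_{r,s}$ with $r=\chi(Q)$. The strategy is to show that each of the four conditions is, directly or indirectly, equivalent to the vanishing $\chi(Q)=0$, i.e.\ to $A$ being (tilting-cotilting equivalent to) a normal form $N_{0,s}$, which is precisely a hereditary algebra of type $\mathbb{A}$ (a linear $\mathbb{A}_s$ quiver with no cycles).

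First I would prove $(d)\Leftrightarrow \chi(Q)=0$: this is immediate from $\dim{\HH{1}{A}}=\chi(Q)$. Next, $(d)$ implies $(b)$: if $\HH{1}{A}=0$ then $\chi(Q)=0$, so $A$ has no oriented $(m+2)$-cycles, hence by Lemma \ref{lem:branched-derived-normal} (applied after Proposition \ref{prop:branched-Abranched}) $A$ is tilting-cotilting, hence derived, equivalent to $N_{0,s}$, which is the path algebra of a quiver of type $\mathbb{A}_s$; since tilted algebras of type $\mathbb{A}$ form exactly the iterated tilted class in the Dynkin case, $A$ is iterated tilted of type $\mathbb{A}$. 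For $(b)\Rightarrow(c)$ I would invoke the classical fact (Happel--Ringel, or \cite{AH81}) that iterated tilted algebras of Dynkin type $\mathbb{A}$ are representation-directed, so their Auslander--Reiten quivers contain no oriented cycles. Then $(c)\Rightarrow(a)$: a representation-directed algebra has a simply connected Auslander--Reiten quiver, by the standard theory of directing modules and the absence of cyclic paths; alternatively, a representation-directed gentle algebra of type $\mathbb{A}$ has a tree quiver with no relations left on cycles, forcing simple connectedness. Finally $(a)\Rightarrow(d)$: for a simply connected algebra the fundamental group $\pi_1(Q,I)$ is trivial, and by the monomorphism $s\colon \Hom{}{\pi_1(Q,I)}{\K^+}\to\HH{1}{A}$ of Remark \ref{rems:core}(b), which for gentle algebras is in fact an isomorphism onto $\HH{1}{A}$, triviality of the fundamental group forces $\HH{1}{A}=0$.

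The cleanest formulation is therefore to route everything through $\chi(Q)=0$ and the normal form, rather than proving the four conditions pairwise. Concretely, each of $(a)$, $(b)$, $(c)$, $(d)$ holds if and only if the invariant pair is $(0,s)$, equivalently the branched quiver has no $(m+2)$-cycles, equivalently $A$ is tilting-cotilting equivalent to the hereditary type-$\mathbb{A}$ algebra $N_{0,s}$. Given the preceding sections, $(b)\Leftrightarrow \chi(Q)=0$ and $(d)\Leftrightarrow\chi(Q)=0$ are essentially bookkeeping, so the whole corollary reduces to identifying the cycle-free branched algebras with the iterated-tilted, representation-directed, simply connected type-$\mathbb{A}$ algebras.

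The main obstacle I anticipate is the precise handling of $(a)\Leftrightarrow(d)$, namely controlling the map $s$ of Remark \ref{rems:core}(b). One must know that for these gentle bound quivers $s$ is not merely injective but bijective, so that $\HH{1}{A}=0$ exactly detects triviality of $\pi_1(Q,I)$, and that $\pi_1(Q,I)$ being free of rank $\chi(Q)$ (as asserted in the introduction) makes its triviality equivalent to $\chi(Q)=0$. The subtlety is that simple connectedness is an Auslander--Reiten-quiver condition while $\HH{1}{A}=0$ is an algebraic one; bridging them requires that in the branched/type-$\mathbb{A}$ setting the combinatorial fundamental group coincides with the topological one governing the Galois-covering criterion, a fact that rests on \cite{AdlP96} and the gentleness of $A$. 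Once that identification is in place, all four conditions collapse to $\chi(Q)=0$ and the proof is routine.
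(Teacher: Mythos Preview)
Your approach is essentially the same as the paper's, though far more explicit: the paper's entire proof is the single sentence ``This follows from the fact that $m$-branched algebras are monomial, and $\dim{\HH{1}{A}} = \chi(Q)$, regardless the characteristic of the field $\K$.'' In other words, the paper also pivots everything on $\chi(Q)=0$ via the identity $\dim\HH{1}{A}=\chi(Q)$, and then silently invokes the standard equivalences for monomial (or schurian triangular) algebras from \cite{AdlP96} and \cite{AH81}: $Q$ a tree $\Leftrightarrow$ $\pi_1(Q,I)=1$ $\Leftrightarrow$ simply connected $\Leftrightarrow$ $\HH{1}{A}=0$, together with the Assem--Happel characterisation of iterated tilted algebras of type $\mathbb{A}$ as the gentle trees. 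Your unpacking of this into a cycle of implications through the normal form $N_{0,s}$ is a faithful elaboration of what the paper leaves implicit.

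One point to correct: in your step $(c)\Rightarrow(a)$ you assert that ``a representation-directed algebra has a simply connected Auslander--Reiten quiver.'' This is false in general; there exist representation-directed algebras that are not simply connected. Your \emph{alternative} argument is the correct one in this context: an $m$-branched algebra with $\chi(Q)>0$ contains an $(m+2)$-cycle with full relations, and the corresponding Nakayama piece produces $\tau$-periodic simples, so the algebra cannot be representation-directed. Hence $(c)$ forces $\chi(Q)=0$, i.e.\ $Q$ is a tree, and for a monomial presentation on a tree one has $\pi_1(Q,I)=1$ and therefore simple connectedness. Drop the general claim and keep only this context-specific route; the rest of your outline is sound, and your identification of the $(a)\Leftrightarrow(d)$ bridge via monomiality (so that $s$ is bijective, equivalently $\dim\HH{1}{A}=\mathrm{rk}\,\pi_1(Q,I)=\chi(Q)$) is exactly what the paper is tacitly using.
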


\begin{proof} This follows from the fact that $m$-branched algebras are monomial, and $\dim{\HH{1}{A}} = \chi(Q)$, regardless  the characteristic of the field $\K$.
 
\end{proof}

\begin{rem} Proposition \ref{prop:HH} shows that  $\HH{2}{A} =0$, whenever $A$  is an $m$-branched algebra, this entails that $m$-branched algebras are rigid.
\end{rem}

\section{Further consequences and remarks}
\subsection{The Avella Alaminos - Geiss map $\phi$}\label{subsec:AG-phi}
As mentioned  before, in \cite{BB10} the map $\phi$, which is a derived invariant for gentle algebras \cite{AG08}, is the core tool used to establish the derived equivalence classification therein. It is a map \mor{\phi}{\mathbb{N} \times \mathbb{N} }{\mathbb{N}} that counts special sequences of paths and relations  in a gentle quiver  $(Q,I)$. We are interested in characterizing this map for normal forms $N_{r,s}$. In what follows we closely follow the exposition of \cite{AG08}.

Let $A=\K Q/I$ be a gentle algebra. A \emph{permitted thread} of $A$ is a path $w=\a_1 \a_2\cdots \a_n$ not belonging to $I$, and of maximal length for this property. A \emph{forbidden thread} is an element of some $\G_n$, with maximal length and without repeated arrows.

We also need trivial permitted and forbidden threads. Let $x\in Q_0$ be such that the sets $s^{-1}(x)$ and $t^{-1}(x)$ have both cardinality at most one. The stationary path at $x$ is a \emph{trivial permitted thread} if when   \mor{\a}{?}{x} and \mor{\b}{x}{?} are two arrows, then $\a\b\not\in I$. We denote this thread by $h_x$. Similarly, the stationary path at $x$ is a \emph{trivial forbidden thread} if when  \mor{\a}{?}{x} and \mor{\b}{x}{?} are two arrows, then $\a\b\in I$. We denote by $p_x$ this thread. Assume $x$  and $y$ are two vertices such there is only one one arrow $\a$ entering $x$, an arrow \mor{\b}{x}{y}, and only one arrow $\c$ leaving $y$. If both $\a\b$, $\b\c\in \mathcal{R}$, then $\b$ is a permitted thread, whereas in case $\a\b$, $\b\c\not\in \mathcal{R}$ the arrow $\b$ is a forbidden thread.

Given that $(Q,I)$ is gentle, from \cite{BR87} one knows that there exist maps \mor{\s , \e}{Q_1}{\{\pm 1\}} verifying:
\begin{itemize}
 \item[$\cdot$] $\s(\b_0) = -\s(\b_1)$ whenever $\b_0$ and $\b_1$ are arrows sharing their source; 
 \item[$\cdot$] $\e(\b_0) = -\e(\b_1)$ whenever $\b_0$ and $\b_1$ are arrows sharing their target;
 \item[$\cdot$] If $\a \b$ is a path not belonging to $I$, then $\s(\b)= - \e(\a)$.
\end{itemize}
These maps, that one can set \emph{``quite arbitrarily''}, as noted in \cite[p. 158]{BR87},  extend to paths, thus to threads: given $w=\a_1 \a_2\cdots \a_n$, set $\s(w)=\s(\a_1)$ and $\e(w)=\e(\a_n)$. We extend this to trivial threads as follows: If $h_x$ is a trivial permitted thread, let \mor{\b}{x}{?} be the (unique) arrow leaving $x$. Then put $\s(h_x)=-\e(h_x)=-\s(\b)$. Similarly, if $p_x$ is the trivial forbidden thread at $x$, let \mor{\a}{?}{x} be the (unique) arrow ending at $x$. Then, put $\s(p_x)=-\e(p_y) = -\e(\a)$. Given a path $w$, denote by $\ell(w)$ its length, that is its number of arrows.
\setcounter{thm}{1}
\begin{algo}[Avella - Alaminos and Geiss \cite {AG08}]\ Let $A=\K Q/I$ be a gentle bound quiver, for which all permitted and forbidden threads are determined.
\begin{enumerate}
 \item \begin{enumerate}
        \item Begin with a permitted thread $H_0$ of $A$,
        \item If $H_i$ is defined, let $P_i$ be the forbidden thread sharing target with $H_i$ and such that $\e(H_i)=-\e(P_i)$,
        \item Let $H_{i+1}$ be the permitted thread sharing source with $P_i$ and such that $\s(H_{i+1}) = -\s(P_i)$.
        \item[]The process stops if $H_n= H_0$ for some natural number $n$. In this case, let $m = \sum_{1\leqslant i \leqslant n} \ell(P_i)$  
       \end{enumerate}
\item Repeat step 1 until all permitted threads of $A$ have been considered;
\item If there are (oriented) cycles $w$ with full relations, add a pair $(0,\ell(w))$ for each of those cycles;
\item Define \mor{\phi_A}{\mathbb{N} \times \mathbb{N} }{\mathbb{N}} by letting $\phi_A(n,m)$ be the number of times the pair $(n,m)$ appears in the algorithm.
\end{enumerate}
\end{algo}

Theorem A in \cite{AG08} asserts that $\phi$ is a derived invariant, but the converse is only known to be true in some particular cases, see \cite[Theorem C]{AG08} or \cite{AA07}. Also, in \cite[Theorem D]{BB10} the authors showed that two gentle algebras $A$ and $A'$ that are derived equivalent to cluster tilted algebras of type $\mathbb{A}$ or $\tilde{\mathbb{A}}$, are themselves derived equivalent if and only if $\phi_A = \phi_{A'}$. We shall state and prove an analogous result. For this sake, we need to compute the function $\phi_{r,s}$ corresponding to a normal form $N_{r,s}$.  As a first remark, note that $\phi_{r,s} (0,m+2)$, is the number of $(m+2)$-cycles with full relations in $(Q,I)$, that is,  $\phi_{r,s} (0,m+2)= r$, see \cite[Remark 6]{AG08}. To compute the remaining part of $\phi_{r,s}$,  label the arrows of $Q_{r,s}$ as follows: the cycles are numbered as in the proof of \ref{prop:HH}, and again $v_j$, be the vertex common the $j^{th}$ and the $(j+1)^{th}$ cycles. Furthermore, let $1$ be the unique vertex of degree $3$ in $Q_{r,s}$. For each $j\in \{1,2\ldots ,r\}$ let $\a^j_0, \a^j_2,\ldots , \a^j_{m+1}$ be the $m+2$  arrows of the $j^{th}$ cycle, in such a way that the source of $\a^j_0$ is precisely $v_{j-1}$, setting $v_0 =1$. In addition, let $\b_1,\b_2,\ldots , \b_n$ be the remaining arrows, where the source of $\b_n$ is $n+1$, the unique source of the quiver,  and $\b_n \ldots  \b_2\b_1$ does not belong to $I$. See below.

$$\SelectTips{eu}{10}\xymatrix@C=.3pc@R=.3pc{   & .\ar@{.}@/^/[rrr] &&& .\ar[ddr] &  & . \ar@{.}@/^/[rrr] &&& .\ar[ddr] &  &  &&&&  & . \ar@{.}@/^/[rrr] &&& . \ar[ddr] &  &&&&&&&&&&\\
&&&&&&&&&&&&&&&&&&&&  &&&&&&&&&&\\
. \ar[ruu] & &&& & v_{r-1} \ar[ddl]_{\a^r_0} \ar[uur]&  &&&  & v_{r-2} \ar[ddl]_{\a^{r-1}_0} \ar@{.}[rrrrr] & &&&&  v_{1}\ar[uur] & &&& & 1\ar[ddl]_{\a^1_0}  && 2\ar[ll]_{\b_1}  && . \ar@{.}[rrrr] \ar[ll]_{\b_{2}} &&&& . && n+1\ar[ll]_{\b_n}\\
&&&&&&&&&&&&&&&&&&&&  &&&&&&&&&& \\
& .\ar@{.}@/_/[rrr]  \ar[uul] &&& . &  & . \ar@{.}@/_/[rrr] \ar[uul] &&&  &  &  &&&&  & . \ar@{.}@/_/[rrr] \ar[uul] &&& .  &    &&&&&&&&&&}$$

Given a pair $(a,b)\in \mathbb{N} \times \mathbb{N}$, denote by $(a,b)^\ast$ the characteristic function of the set  $\{(a,b)\} \subseteq \mathbb{N} \times \mathbb{N}$. The following will be useful.

\begin{lem}\label{lem:phi-normal} Let $N_{r,s}$ be a normal form, as defined at the end of Section 1. Then $$\phi_{r,s} = r \cdot (0,m+2)^\ast + (s+1-r,s-1-r(m+1))^\ast$$
\end{lem}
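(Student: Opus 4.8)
The plan is to compute $\phi_{r,s}$ directly by running the Avella-Alaminos--Geiss algorithm on the explicit labelled quiver $Q_{r,s}$. The contribution $r\cdot(0,m+2)^\ast$ is immediate from step $(3)$ of the algorithm, since $Q_{r,s}$ has exactly $r$ oriented $(m+2)$-cycles with full relations, each of length $m+2$; this is already noted in the excerpt via \cite[Remark 6]{AG08}. The real work is to show that the iterative procedure of step $(1)$ produces exactly one additional pair, namely $(s+1-r,\,s-1-r(m+1))^\ast$, and that \emph{every} permitted thread of $N_{r,s}$ gets swept up in this single pass (so that step $(2)$ adds nothing new).

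First I would enumerate the permitted and forbidden threads of $N_{r,s}$ using the gentle structure. The forbidden threads are precisely the $2$-step relations $\a^j_i\a^j_{i+1}$ living on the cycles (since the linear arrows $\b_k$ carry no relations), giving $m+2$ forbidden threads of length $1$ per cycle; the permitted threads are the maximal nonzero paths, which on each cycle are the $m+2$ paths of length $1$, together with the long permitted thread running along the linear part $\b_n\cdots\b_1$ and threading through one arrow of each cycle at the branch vertices $v_j$. Then I would run the algorithm: start with a permitted thread $H_0$, pass to the forbidden thread $P_0$ sharing its target with the prescribed sign condition $\e(H_0)=-\e(P_0)$, then to the next permitted thread $H_1$ sharing source with $P_0$ under $\s(H_1)=-\s(P_0)$, and so on. The key claim is that this single orbit visits all the relevant permitted threads and closes up after accumulating a total forbidden-thread length of $m=\sum\ell(P_i)$ equal to $s-1-r(m+1)$, while the number $n$ of steps (the first coordinate) equals $s+1-r$.

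The main obstacle will be the careful bookkeeping of the sign functions $\s,\e$ of \cite{BR87} along the branch vertices, to verify that the algorithm's sign conditions force the path to turn the correct way at each vertex of degree $3$, so that one pass through the quiver accounts for everything and the orbit closes exactly once. To make this tractable I would count the two coordinates combinatorially rather than tracking the orbit arrow-by-arrow. For the second coordinate, $m=\sum_i\ell(P_i)$ is the total length of forbidden threads encountered; since each of the $r$ cycles contributes forbidden threads and the linear part contributes none, I would show the sum telescopes to $(s-1)-r(m+1)$ by comparing the total number of arrows, $|Q_1|=r(m+2)+(s-r(m+1)-1)$ (using $\chi(Q_{r,s})=r$ and $|Q_0|=s$, so $|Q_1|=s-1+r$), against the arrows consumed by the cycle-relations already tallied in step $(3)$. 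For the first coordinate, $n$ counts the permitted threads in the orbit, which I would obtain as the number of permitted threads not already isolated as length-$(0,\cdot)$ cycle contributions, yielding $s+1-r$.

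I expect the length computation to reduce to a single Euler-characteristic identity, so the genuine difficulty is purely the sign-compatibility argument ensuring that exactly one orbit arises. Once that is checked, the two coordinates follow from the arrow count, and since the algorithm terminates when $H_n=H_0$ and no permitted threads remain unconsidered, step $(2)$ contributes nothing further and the stated formula for $\phi_{r,s}$ follows.
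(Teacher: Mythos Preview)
Your overall strategy matches the paper's: run the Avella-Alaminos--Geiss algorithm on $N_{r,s}$ and obtain the $(0,m+2)$ contribution from step~(3), then show step~(1) produces a single remaining orbit. But your identification of the threads is backwards, and this makes the combinatorial count you sketch unworkable.

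Concretely, you write that ``each of the $r$ cycles contributes forbidden threads and the linear part contributes none''. In fact the opposite holds. At a non-branch vertex $x$ on a cycle, the forbidden thread selected by the sign condition $\e(H_i)=-\e(P_i)$ is the \emph{trivial} one $p_x$ (length $0$), because the only arrow into $x$ is the cycle arrow you just used, and the non-trivial forbidden path through $x$ is the full cycle, which carries the \emph{same} sign $\e$ as $H_i$. The forbidden threads of nonzero length that actually appear in the orbit are precisely the linear arrows $\b_1,\ldots,\b_n$, each of length~$1$; this is why the second coordinate equals $n=s-1-r(m+1)$. Your claim that the relations $\a^j_i\a^j_{i+1}$ are the forbidden threads is also off: on a cycle with full relations those length-$2$ paths are not maximal, since they extend all the way around.

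The paper sidesteps this by explicitly tracking the orbit for $r=2$ (finding $H_0=\b_n\cdots\b_1\a^1_0$, then trivial $P_i$'s while winding through the cycles, then the $\b_i$'s as $P_i$'s on the way back) and then arguing by induction on $r$ that each additional cycle adds exactly $m$ steps to the first coordinate while leaving the second coordinate unchanged. If you want to salvage a direct counting argument, you must first correct the thread inventory; once you see that the $P_i$'s of positive length are the $n$ arrows $\b_k$ and all other $P_i$'s are trivial, both coordinates drop out from the relation $s=n+1+r(m+1)$.
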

\begin{proof} The term $r \cdot (0,m+2)^\ast$ comes from step $(3)$. Before going on, note that counting vertices and arrows yields the relation 
\begin{equation}
 s=n+1+r(m+1).
\end{equation}
From this, we have $s+1-r = n + rm +1$, and $s-1-r(m+1) = n$. We now proceed by induction on $r$. As we find it more illustrative and useful for the sequel, we start with the case $r=2$, leaving the (easier) case $r=1$ to the reader.

Assume $r=2$, and let $k$ be such that $t(\a^1_k)= v_1$. Start the algorithm with $H_0 = \b_n\cdots \b_1\a^1_0$ so that $P_0=p_{t(\a^1_0)}$. Thus, $H_1 = \a^1_1$ and $P_1=p_{t(\a^1_1)}$. Then we continue running through the rightmost cycle, until $H_{k-1}=\a^1_{k-1}$ and $P_{k-1} = p_{t\a^1_{k-1}}$. At the next step we get $H_{k}=\a^1_k\a^2_0$, switching to second cycle, and $P_k= p_{t(\a^2_0)}$. Then, start running through the second cycle, and get $H_{k+m}= \a^2_m$, so that at the next step we have $H_{k+m+1} = \a^2_{m+1}\a^1_{k+1}$, coming back to the first cycle. Running through it, we are led to $H_{2m} = \a^1_m$ and then $H_{2m+1}=\a^1_{m+1}$, but, in contrast with what precedes, $P_{2m+1} = \b_1$. From then, $H_{2m+2}=h_2,\ P_{2m+2}=\b_2$, and we continue going backwards through the arrows $\b$. We arrive then to $H_{2m+n+1} = h_{n+1},\ P_{2m+n+1}= p_{n+1}$ and $H_{2m+n+2} = H_0$. The only forbidden paths of non zero length are the arrows $\b$. We thus obtain the pair $(2m+n+2,n)$, which from the arithmetic relation $(1)$ is easily seen to be the desired one.

Assume now the statement is true for $r=l-1$, and let us analyze the algorithm for $r=l$. The forbidden paths of nonzero length that will appear are exactly the same as those in the case $r=2$, namely the arrows $\b$, so we will get a pair of the form $(T ,n)$. What remains to see is how many steps are added to the algorithm, so we can determine $T$. We claim that there are exactly $m$ of them. Indeed, we are adding a single cycle of length $m+2$. As in the case $r=2$, two of these arrows are used to switch from one cycle to another, and the remaining ones lead (each of them) to one additional step of the algorithm.  The total number of steps is, by the induction hypothesis, and the previous argument
$$T= (n+rm+1)+m = n+(r+1)m+1.$$

Since every permitted thread has been used, this finishes the proof. 

\end{proof}

In particular this result, together with Theorem \ref{thm:main} enables us to recover the classification of the algebras derived equivalent to cluster tilted algebra of type $\mathbb{A}$,  of Bobinski and Buan \cite[Theorem A]{BB10}. Note however that we additionally obtained an explicit description of the bound quiver of such an algebra. The conditions characterizing those bound quivers are  really easy to verify, for no computations are required, as in \cite[Theorem A]{BB10}, which we can easily recover.

\begin{cor}\label{cor:ThmA-BB10}
 Let $A$ be a gentle algebra. Then $A$ is derived equivalent to a cluster tilted algebra of type $\mathbb{A}$ if and only if there exist natural numbers $r,n$ such that $$\phi_A = r\cdot(0,3)^\ast + (n+r+2,n)^\ast$$
\end{cor}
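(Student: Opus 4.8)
The plan is to reduce Corollary \ref{cor:ThmA-BB10} to the results already assembled, specializing everything to $m=1$, so that the $(m+2)$-cycles become $3$-cycles. The main structural input is Theorem \ref{thm:main}: a gentle algebra $A$ is derived equivalent to a cluster tilted algebra of type $\mathbb{A}$ precisely when each connected component of $A$ is $1$-branched, which by Proposition \ref{prop:branched-derived} means each component is tilting-cotilting equivalent to some normal form $N_{r,s}$. Since the map $\phi$ is additive over connected components (a direct consequence of the Algorithm, whose loops are confined to a single component), and since $\phi$ is a derived invariant by \cite{AG08}, it suffices to understand $\phi$ on a single normal form $N_{r,s}$ with $m=1$.

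First I would establish the forward direction. Assume $A$ is derived equivalent to a cluster tilted algebra of type $\mathbb{A}$. By Theorem \ref{thm:main} and the remark following it, $A$ is tilting-cotilting equivalent, hence derived equivalent, to a normal form $N_{r,s}$ with $m=1$. Then $\phi_A = \phi_{r,s}$ by derived invariance, and I invoke Lemma \ref{lem:phi-normal} with $m=1$, which gives
\begin{equation*}
\phi_{r,s} = r\cdot(0,3)^\ast + (s+1-r,\,s-1-2r)^\ast.
\end{equation*}
Setting $n = s-1-2r$, so that $n \in \mathbb{N}$ and $s+1-r = n+r+2$, this is exactly $r\cdot(0,3)^\ast + (n+r+2,n)^\ast$, as required. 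I would note that one must check $n = s-1-2r \geqslant 0$, which holds because in $N_{r,s}$ the $r$ cycles of length $3$ account for $2r$ vertices beyond the shared branch vertices, and the relation $s = n+1+r(m+1)$ from the proof of Lemma \ref{lem:phi-normal} forces $n \geqslant 0$.

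For the converse, I would argue that the data $r$ and $n$ determine a normal form. Given $\phi_A = r\cdot(0,3)^\ast + (n+r+2,n)^\ast$, set $s = n+1+2r$; then $\phi_A = \phi_{r,s}$ by the computation above with $m=1$. Since $A$ is gentle and $\phi$ is a complete derived invariant \emph{within} the class of algebras derived equivalent to cluster tilted algebras of type $\mathbb{A}$ (this is \cite[Theorem D]{BB10}, or alternatively follows from Theorem \ref{thm:main} combined with the fact that $(r,s)$ is the invariant pair and recovering $r,s$ from the shape of $\phi$), the equality $\phi_A = \phi_{r,s}$ forces $A$ to be derived equivalent to $N_{r,s}$, which is cluster tilted. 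The one point requiring care is that knowing $\phi_A$ has this prescribed form does not by itself guarantee $A$ lies in the relevant derived class; the cleanest route is to observe that the hypothesis already restricts $A$ to be gentle with precisely $r$ oriented $3$-cycles with full relations (read off from $\phi_A(0,3)=r$, cf. \cite[Remark 6]{AG08}) and with the remaining combinatorics matching $N_{r,s}$, so that $A$ is $1$-branched and Theorem \ref{thm:main} applies directly.

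The main obstacle I anticipate is justifying the converse rigorously, since $\phi$ is not known to be a complete invariant in general; the argument must stay inside the class where completeness is available. The safest formulation is to lean on Theorem \ref{thm:main} rather than on completeness of $\phi$: from $\phi_A(0,3)=r$ one extracts that $A$ has $r$ three-cycles with full relations, and the form of the second term pins down the number of vertices and arrows, showing $A$ satisfies Definition \ref{de:def-branched}, i.e. $A$ is $1$-branched; then $b)\Leftrightarrow a)$ of Theorem \ref{thm:main} closes the argument without invoking completeness of $\phi$ at all. I would phrase the proof so that the forward direction uses Lemma \ref{lem:phi-normal} and invariance, and the converse uses the branched characterization of Theorem \ref{thm:main}, thereby sidestepping the subtlety entirely.
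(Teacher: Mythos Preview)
Your proposal is correct and follows essentially the same route as the paper: the forward direction via Theorem \ref{thm:main}, derived invariance of $\phi$, and Lemma \ref{lem:phi-normal} with $m=1$ and $s=n+1+2r$; the converse by reading off $r$ three-cycles with full relations from $\phi_A(0,3)=r$, then showing $\chi(Q)=r$ so that $A$ is $1$-branched and Theorem \ref{thm:main} applies. The only place where the paper is sharper is your phrase ``the form of the second term pins down the number of vertices and arrows'': the paper does not argue this directly but simply cites \cite[6.2]{BB10} to conclude $\chi(Q)=r$ from the shape of $\phi_A$, which is exactly the ingredient your sketch needs to make that step rigorous.
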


\begin{proof} Assume $A$ is derived equivalent to a cluster tilted algebra of type $\mathbb{A}$. From \ref{thm:main}, $A$ is derived equivalent to a normal form $N_{r,s}$, and $m=1$. Then, use \ref{lem:phi-normal}, and the fact that with those notations one has $s=n+1+2r$.

On the other hand, assume $A=\K Q/I$ is gentle such that  $\phi_A = r\cdot(0,3)^\ast + (n+r+2,n)^\ast$. Since $\phi_A(0,3)=r$, in $(Q,I)$ there are exactly $r$ cycles of length $3$ with full relations. In addition, \cite[6.2]{BB10} gives that the Euler characteristic of $Q$ is precisely $r$, and we are done. 
\end{proof}

\begin{rem} Theorem F in \cite{BB10} asserts that an algebra $A$ which is derived equivalent to a cluster tilted algebra of type $\mathbb{A}$ is itself cluster tilted if and only if its Gorenstein dimension $\ts{Gdim}\ A$ is at most 1. It follows from Murphy's description of $m$-cluster tilted algebras of type $\mathbb{A}$ (see \cite[2.20]{M10}), and \cite[3.4]{GR05} that an $m$-branched algebra is $m$-cluster tilted algebra if and only if $\ts{Gdim}\ A \leqslant m$.
 
\end{rem}

As we noted in the discussion following \ref{subsec:AG-phi}, it is not known in general whether or not $\phi$ is a complete invariant for gentle algebras. It was shown in \cite{AG08} that if $A$ and $A'$ two algebras whose quivers have Euler characteristic $1$ are such that $\phi_A = \phi_{A'}$, then $A$ and $A'$ are derived equivalent, and a similar result in \cite{AA07}. Further, Bobinski and Buan showed in \cite{BB10} that $\phi$ is a complete invariant for cluster tilted algebras. We can prove that this is also the case for $m$-branched algebras. The following completes the proof of Theorem B.

\begin{thm} Let $A$ and $A'$ be $m$-branched algebras. Then $A$ and $A'$ are derived equivalent if and only if $\phi_A = \phi_{A'}$.
\end{thm}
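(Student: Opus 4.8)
The plan is to prove both directions, noting that one is immediate from the general theory and the other is the substantive content. For the forward direction, if $A$ and $A'$ are $m$-branched and derived equivalent, then $\phi_A = \phi_{A'}$ follows at once from Theorem A in \cite{AG08}, which asserts that $\phi$ is a derived invariant of gentle algebras. Since $m$-branched algebras are gentle by definition \ref{de:def-branched}, there is nothing further to prove here.

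The converse is where the work lies. Suppose $\phi_A = \phi_{A'}$. The strategy is to reduce to the connected normal forms $N_{r,s}$, whose $\phi$-functions we have computed explicitly in Lemma \ref{lem:phi-normal}, and then read off the invariant pair $(r,s)$ from $\phi$. First I would decompose $A$ and $A'$ into connected components; by proposition \ref{prop:branched-derived}, each component of an $m$-branched algebra is again $m$-branched, hence by Theorem \ref{thm:main} each component is tilting-cotilting equivalent --- and in particular derived equivalent --- to a unique normal form $N_{r_i, s_i}$. Since $\phi$ is additive over connected components (the algorithm runs independently on each component), $\phi_A$ is the sum of the functions $\phi_{r_i, s_i}$, and likewise for $A'$. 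Thus the task becomes purely combinatorial: to show that the multiset of invariant pairs $\{(r_i, s_i)\}$ is recovered from the formal sum $\sum_i \phi_{r_i, s_i}$, using the closed form
$$\phi_{r,s} = r\cdot (0,m+2)^\ast + (s+1-r,\, s-1-r(m+1))^\ast$$
established in \ref{lem:phi-normal}.

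To extract the pairs, I would argue as follows. Evaluating $\phi_A$ at $(0,m+2)$ counts (with multiplicity from the first term) contributions $r_i$ summed over all components, but the key observation is that each component with $r_i \geqslant 1$ contributes a \emph{single} distinguished pair $(s_i+1-r_i,\, s_i-1-r_i(m+1))$ whose first coordinate is strictly positive, disjoint from the cycle-counting part supported on $(0, m+2)$. For each such pair, the two coordinates $(a,b)$ satisfy $a = s_i+1-r_i$ and $b = s_i - 1 - r_i(m+1)$, a pair of linear equations in $(r_i, s_i)$ whose $2\times 2$ coefficient matrix is invertible (its determinant is $-m-2 + 1 = -(m+1) \neq 0$), so $(r_i,s_i)$ is uniquely determined by its distinguished pair. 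Reading off from $\phi_A$ the support away from $(0,m+2)$ with multiplicities therefore recovers the multiset of invariant pairs, and matching this against the cycle count at $(0,m+2)$ accounts for any components with $r_i = 0$ (which are simply connected, contributing only to the positive-first-coordinate part).

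The main obstacle I anticipate is bookkeeping around coincidences and degenerate components: two distinct components could in principle produce the same distinguished pair, or a component with $r_i=0$ could have a distinguished pair colliding with that of a cyclic component, so one must check that the total multiplicities still determine the multiset of pairs unambiguously. I would handle this by treating $\phi_A - \phi_A(0,m+2)\cdot(0,m+2)^\ast$ as a formal $\mathbb{N}$-linear combination of characteristic functions $(a,b)^\ast$ with $a > 0$, and invoking the injectivity of the map $(r,s) \mapsto (s+1-r,\, s-1-r(m+1))$ just established to conclude that equal $\phi$-functions force equal multisets of invariant pairs. Once the multisets of pairs agree, each component of $A$ is derived equivalent to the matching component of $A'$ by Theorem \ref{thm:main}, and assembling these component-wise equivalences yields a derived equivalence $A \simeq A'$, completing the proof.
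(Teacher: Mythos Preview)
Your argument follows the same route as the paper's: reduce each algebra to its normal form via Theorem~\ref{thm:main}, invoke the invariance of $\phi$ to identify $\phi_A$ with $\phi_{r,s}$, and then recover the invariant pair from the explicit formula of Lemma~\ref{lem:phi-normal}. The paper's proof is terser because it implicitly treats only the connected case, passing directly to a single normal form $N_{r,s}$ for each of $A$ and $A'$ and concluding $(r,s)=(r',s')$; your additional work on disconnected algebras (decomposing into components, using additivity of $\phi$, and recovering the multiset of invariant pairs) is a legitimate strengthening that the paper does not attempt. One minor slip: the determinant of your $2\times 2$ system is $(-1)(1)-(1)(-(m+1))=m$, not $-(m+1)$, though since $m\geqslant 1$ your invertibility conclusion is unaffected.
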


\begin{proof}
Assume tat $A$ and $A'$ are derived equivalent. Then the invariance of $\phi$ and lemma \ref{lem:phi-normal} give the result. 

On the other hand assume  $\phi_A = \phi_{A'}$, and let $N_{r,s}$, $N_{r',s'}$ be the normal forms derived equivalent to $A$ and $A'$, respectively.  Again, it follows from the invariance of $\phi$ that since  $\phi_A=\phi_{r,s}$ and $\phi_{A'}=\phi_{r',s'}$, we have $\phi_{r,s} = \phi_{r',s'}$. It then follows from the previous lemma that $(r,s)=(r',s')$, and thus that $N_{r,s}$ and $N_{r',s'}$ are derived equivalent.
\end{proof}

\setcounter{subsection}{\value{thm}}
\subsection{Cartan matrices}\addtocounter{thm}{1}
In \cite[3.2]{BH07}, Bessenrodt and Holm established that the unimodular equivalence class of the Cartan matrix of a gentle finite dimensional algebra is a derived invariant. This enabled them  to obtain the fact that if two gentle algebras $A=\K Q/I$ and $A'=\K Q' / I'$ are derived equivalent, then the number of cycles of even length with full relations in $(Q,I)$ equals that in $(Q',I')$; and the same holds for cycles with full relations of odd length (see also \cite[Theorem 1]{H05}. This result has been of central importance in the derived classification of Buan and Vatne (see \cite[Section 4]{Buan-Vatne08}), as well in that of Murphy (see \cite[3.8]{M10}). This invariant does not retain the length of the cycles, but only their length modulo 2. In contrast, the Hochschild cohomology ring does detect this information, and, furthermore, it distinguishes the case where the characteristic of the ground field is 2, another feature missed by the Cartan matrix. Thus, the methods provided here can easily be used to obtain independent proofs of the mentioned results of \cite{Buan-Vatne08, M10}. We end with an example.

\begin{ex}
 Let $(Q,I)$ and $(Q',I')$ be the bound quivers  
\begin{center}
 \begin{tabular}{ccccc}
$\SelectTips{eu}{10}\xymatrix@R=.6pc@C=.6pc{
		& .\ar[dr]^{\a_0}	&	&	&	&	&	&	&\\ 
.\ar[ur]^{\a_2}	&		&.\ar[ll]^{\a_1}	&	&.\ar[ll]_{\b_3}	&	&.\ar[ll]_{\b_2}	&	&.\ar[ll]_{\b_1}	}$ & && &$\SelectTips{eu}{10}\xymatrix@R=.8pc@C=.5pc{
		&		&.\ar[drr]^{\a_0}	&	&	& &\\ 
.\ar[urr]^{\a_4}	&		&		&	&.\ar[dl]^{\a_1}	&& \ar[ll]^\b\\
		&.\ar[ul]^{\a_3}	&		&.\ar[ll]^{\a_2}	&	& &}$ \\
 &  & &&\\

$I=\langle \a_i\a_{i+1},  \b_j\b_{j+1}| 0\leqslant i \leqslant 2, 1\leqslant j \leqslant 2 \rangle$&&&&$I'=\langle \a_i\a_{i+1}| 0\leqslant i \leqslant 4 \rangle$ and \\
and indices are read modulo 3.&&&& indices are read modulo 5.
   \end{tabular}
\end{center}
The algebra $A= \K Q/I$ is derived equivalent to a cluster tilted algebra of type $\mathbb{A}_6$, though it is not itself a cluster tilted algebra. The algebra $A'=\K Q/I'$ is a $3$-cluster tilted algebra of type $\mathbb{A}_6$. In both cases the determinant of the Cartan matrix is $2$, and, furthermore, the Cartan matrices are unimodular equivalent. In contrast, in case the characteristic of  $\K$ is not 2, then $\HH{6}{A} \simeq \K$, whereas $\HH{6}{A'}=0$; and $\HH{10}{A}=0$ whereas $\HH{10}{A'}\simeq \K $.
\end{ex}

\section*{Acknowledgements}
The authors gratefully thank Ibrahim Assem for several interesting and helpful discussions. This paper is part of the second author's Ph.D. thesis, done under the direction of Ibrahim Assem, she gratefully acknowledges financial support from the faculty of Sciences of the Universit\'e de Sherbrooke, and the  \emph{Institut des Sciences Math\' ematiques, I.S.M.}
 
\bibliographystyle{acm}
\bibliography{../../ReferenciaMat/biblio}

\begin{thebibliography}{10}

\bibitem{ABCP09}
{\sc Assem, I., Br{\"u}stle, T., Charbonneau-Jodoin, G., and Plamondon, P.-G.}
\newblock Gentle algebras arising from surface triangulations.
\newblock {\em Algebra Number Theory 4}, 2 (2010), 201--229.

\bibitem{AdlP96}
{\sc Assem, I., and de~la Pe{\~n}a, J.~A.}
\newblock The fundamental groups of a triangular algebra.
\newblock {\em Comm. Algebra 24}, 1 (1996), 187--208.

\bibitem{AH81}
{\sc Assem, I., and Happel, D.}
\newblock Generalized tilted algebras of type {$A_{n}$}.
\newblock {\em Comm. Algebra 9}, 20 (1981), 2101--2125.

\bibitem{ASS06}
{\sc Assem, I., Simson, D., and Skowro{\'n}ski, A.}
\newblock {\em Elements of the representation theory of associative algebras 1
  : Techniques of representation theory}, vol.~65 of {\em London Mathematical
  Society Student Texts}.
\newblock Cambridge University Press, Cambridge, 2006.
\newblock Techniques of representation theory.

\bibitem{ARS95}
{\sc Auslander, M., Reiten, I., and O., S.~S.}
\newblock {\em Representation theory of {A}rtin algebras}, vol.~36 of {\em
  Cambridge Studies in Advanced Mathematics}.
\newblock Cambridge University Press, Cambridge, 1995.

\bibitem{AA07}
{\sc Avella-Alaminos, D.}
\newblock Derived classification of gentle algebras with two cycles.
\newblock http://arxiv.org/abs/0708.3839, 2007.

\bibitem{AG08}
{\sc Avella-Alaminos, D., and Geiss, C.}
\newblock Combinatorial derived invariants for gentle algebras.
\newblock {\em J. Pure Appl. Algebra 212}, 1 (2008), 228--243.

\bibitem{B97}
{\sc Bardzell, M.~J.}
\newblock The alternating syzygy behavior of monomial algebras.
\newblock {\em J. Algebra 188}, 1 (1997), 69--89.

\bibitem{Bastian09}
{\sc Bastian, J.}
\newblock Mutation classes of $\tilde{A}_n$ quivers and derived equivalence
  classification of cluster tilted algebras of type $\tilde{A}_n$.
\newblock http://arxiv.org/abs/0901.1515.

\bibitem{BM08}
{\sc Baur, K., and Marsh, R.~J.}
\newblock A geometric description of {$m$}-cluster categories.
\newblock {\em Trans. Amer. Math. Soc. 360}, 11 (2008), 5789--5803.

\bibitem{BH07}
{\sc Bessenrodt, C., and Holm, T.}
\newblock {$q$}-{C}artan matrices and combinatorial invariants of derived
  categories for skewed-gentle algebras.
\newblock {\em Pacific J. Math. 229}, 1 (2007), 25--47.

\bibitem{BB10}
{\sc Bobinski, G., and Buan, A.~B.}
\newblock The algebras derived equivalent to gentle cluster tilted algebras.
\newblock http://arxiv.org/abs/1009.1032.

\bibitem{BB80}
{\sc Brenner, S., and Butler, M.}
\newblock {G}eneralization of the {B}ernstein-{G}el'fand-{P}onomarev
  {R}eflection {F}unctors.
\newblock {\em Springer Lecture Notes 832\/} (1980), 103--169.

\bibitem{BMRRT06}
{\sc Buan, A.~B., Marsh, R., Reineke, M., Reiten, I., and Todorov, G.}
\newblock Tilting theory and cluster combinatorics.
\newblock {\em Adv. Math. 204}, 2 (2006), 572--618.

\bibitem{BMR06}
{\sc Buan, A.~B., Marsh, R.~J., and Reiten, I.}
\newblock Cluster-tilted algebras.
\newblock {\em Trans. Amer. Math. Soc. 359}, 1 (2007), 323--332 (electronic).

\bibitem{Buan-Vatne08}
{\sc Buan, A.~B., and Vatne, D.~F.}
\newblock Derived equivalence classification for cluster-tilted algebras of
  type {$A_n$}.
\newblock {\em J. Algebra 319}, 7 (2008), 2723--2738.

\bibitem{Bus06-cohomo}
{\sc Bustamante, J.~C.}
\newblock The cohomology structure of string algebras.
\newblock {\em J. Pure Appl. Algebra 204}, 3 (2006), 616--626.

\bibitem{BR87}
{\sc Butler, M. C.~R., and Ringel, C.~M.}
\newblock Auslander-{R}eiten sequences with few middle terms and applications
  to string algebras.
\newblock {\em Comm. Algebra 15}, 1-2 (1987), 145--179.

\bibitem{CSS06}
{\sc Caldero, P., Chapoton, F., and Schiffler, R.}
\newblock Quivers with relations arising from clusters ({$A_n$} case).
\newblock {\em Trans. Amer. Math. Soc. 358}, 3 (2006), 1347--1364.

\bibitem{C98}
{\sc Cibils, C.}
\newblock Hochschild cohomology algebra of radical square zero algebras.
\newblock In {\em Algebras and modules, {II} ({G}eiranger, 1996)\/}
  (Providence, RI, 1998), vol.~24 of {\em CMS Conf. Proc.}, Amer. Math. Soc.,
  pp.~93--101.

\bibitem{PS01}
{\sc de~la Pe{\~n}a, J.~A., and Saor{\'{\i}}n, M.}
\newblock On the first {H}ochschild cohomology group of an algebra.
\newblock {\em Manuscripta Math. 104}, 4 (2001), 431--442.

\bibitem{FR05}
{\sc Fomin, S., and Reading, N.}
\newblock Generalized cluster complexes and {C}oxeter combinatorics.
\newblock {\em Int. Math. Res. Not.}, 44 (2005), 2709--2757.

\bibitem{FST08}
{\sc Fomin, S., Shapiro, M., and Thurston, D.}
\newblock Cluster algebras and triangulated surfaces. {I}. {C}luster complexes.
\newblock {\em Acta Math. 201}, 1 (2008), 83--146.

\bibitem{FZ02}
{\sc Fomin, S., and Zelevinsky, A.}
\newblock Cluster algebras. {I}. {F}oundations.
\newblock {\em J. Amer. Math. Soc 15}, 2 (2002), 497--529.

\bibitem{GR05}
{\sc Gei{\ss}, C., and Reiten, I.}
\newblock Gentle algebras are {G}orenstein.
\newblock In {\em Representations of algebras and related topics}, vol.~45 of
  {\em Fields Inst. Commun.} Amer. Math. Soc., Providence, RI, 2005,
  pp.~129--133.

\bibitem{G63}
{\sc Gerstenhaber, M.}
\newblock The cohomology structure of an associative ring.
\newblock {\em Ann. of Math. (2) 78\/} (1963), 267--288.

\bibitem{H88}
{\sc Happel, D.}
\newblock {\em Triangulated categories in the representation theory of
  finite-dimensional algebras}, vol.~119 of {\em London Mathematical Society
  Lecture Note Series}.
\newblock Cambridge University Press, Cambridge, 1988.

\bibitem{Hap89}
{\sc Happel, D.}
\newblock {\em Hochschild cohomology of finite-dimensional algebras}, vol.~1404
  of {\em Lecture Notes in Math.}
\newblock Springer, Berlin, 1989, pp.~108--126.

\bibitem{HU89}
{\sc Happel, D., and Unger, L.}
\newblock Almost complete tilting modules.
\newblock {\em Proc. Amer. Math. Soc. 107}, 3 (1989), 603--610.

\bibitem{H05}
{\sc Holm, T.}
\newblock Cartan determinants for gentle algebras.
\newblock {\em Arch. Math. (Basel) 85}, 3 (2005), 233--239.

\bibitem{K04}
{\sc Keller, B.}
\newblock Hochschild cohomology and derived {P}icard groups.
\newblock {\em J. Pure Appl. Algebra 190}, 1-3 (2004), 177--196.

\bibitem{K05}
{\sc Keller, B.}
\newblock On triangulated orbit categories.
\newblock {\em Doc. Math. 10\/} (2005), 551--581.

\bibitem{M10}
{\sc Murphy, G.~J.}
\newblock Derived equivalence classification of {$m$}-cluster tilted algebras
  of type {$A_n$}.
\newblock {\em J. Algebra 323}, 4 (2010), 920--965.

\bibitem{R89}
{\sc Rickard, J.}
\newblock Morita theory for derived categories.
\newblock {\em J. London Math. Soc. (2) 39}, 3 (1989), 436--456.

\bibitem{SF08}
{\sc S{\'a}nchez-Flores, S.}
\newblock The {L}ie module structure on the {H}ochschild cohomology groups of
  monomial algebras with radical square zero.
\newblock {\em J. Algebra 320}, 12 (2008), 4249--4269.

\bibitem{SZ03}
{\sc Schr{\"o}er, J., and Zimmermann, A.}
\newblock Stable endomorphism algebras of modules over special biserial
  algebras.
\newblock {\em Math. Z. 244}, 3 (2003), 515--530.

\bibitem{T07}
{\sc Thomas, H.}
\newblock Defining an {$m$}-cluster category.
\newblock {\em J. Algebra 318}, 1 (2007), 37--46.

\bibitem{WW85}
{\sc Wald, B., and Waschb{\"u}sch, J.}
\newblock Tame biserial algebras.
\newblock {\em J. Algebra 95}, 2 (1985), 480--500.

\end{thebibliography}
\end{document}